\newtheorem{theorem}{Theorem}
\newtheorem{definition}[theorem]{Definition}
\newtheorem{proposition}[theorem]{Proposition}
\newtheorem{remark}[theorem]{Remark}
\newenvironment{proof}[1][Proof]{\noindent\textbf{#1.} }{\ \rule{0.5em}{0.5em}}
\begin{document}

\author{ U. E. Arslan, Z. Arvasi and G. Onarl\i\ }
\title{\textbf{(Co)-Induced Two-Crossed Modules}}
\date{}
\maketitle

\begin{abstract}
We introduce the notion of an (co)-induced $2$-crossed module, which
extends the notion of an (co)-induced crossed module (Brown and
Higgins).
\end{abstract}


\section*{Introduction}

Induced crossed modules were defined by Brown and Higgins \cite{BROWN and
HIGGINS} and studied further in paper by Brown and Wensley \cite{BROWN and
WENSLEY,BROWN and WENSLEY2},\cite{BROWN1}. This is looked at in detail in a book by Brown,
Higgins and Sivera \cite{BHS}. Induced crossed modules allow detailed
computations of non-Abelian information on second relative groups.

To obtain analogous result in dimension $3$, we make essential use of a $2$%
-crossed module defined by Conduch\'{e} \cite{CONDUCHE}.

A major aim of this paper is given any $\theta =(\phi ^{\prime
},\phi ):(M,P,\alpha )\rightarrow (N,Q,\beta )$ pre-crossed module morphism to introduce induced $(N \rightarrow Q)$-$2$-crossed module%
\begin{equation*}
\left\{ \theta _{\ast }\left( L\right) ,N ,Q,\partial _{\ast},\beta
\right\}
\end{equation*}
which can be used in applications of the $3$-dimensional Van Kampen Theorem.

The method of Brown and Higgins \cite{BROWN and HIGGINS} is
generalized to give results on $\{ \theta _{\ast }\left( L\right) ,N
,Q,\partial _{\ast},\beta\}.$ Also we construct pullback
(co-induced) $2$-crossed modules in terms of the concept of pullback
pre-crossed modules in \cite{BHS}. However; Brown, Higgins and Sivera
\cite{BHS} indicate a bifibration from crossed squares, so leading
to the notion of induced crossed square, which is relevant to a
triadic Hurewicz theorem results in dimension $3$.\\

\footnotetext[1]{We are grateful to referee and D. Conduch\'{e} for helpful comments.}



\section{Preliminaries\hspace{1cm}}

Throughout this paper all actions will be left. The right actions in
some references will be rewritten by using left actions.

\subsection{Crossed Modules}

Crossed modules of groups were initially defined by Whitehead \cite{w1}
as models for (homotopy) $2$-types. We recall from \cite{PORTER} the
definition of crossed modules of groups.

A crossed module, $(M,P,\partial )$, consists of groups $M$ and $P$ with a
left action of $P$ on $M$, written $\left( p,m\right) \mapsto $ $^{p}m$ and
a group homomorphism $\partial :M\rightarrow P$ satisfying the following
conditions:%
\begin{equation*}
CM1)\ \partial \left( ^{p}m\right) =p\partial(
m)p^{-1}\text{\qquad\ and \qquad }CM2)\
^{\partial \left( m\right) }n=mnm^{-1}\text{ }
\end{equation*}%
for $p\in P,m,n\in M$. We say that $\partial :M\rightarrow P$ is a
pre-crossed module, if it satisfies CM$1.$

If $(M,P,\partial )$ and $(M^{\prime },P^{\prime },\partial ^{\prime })$ are
crossed modules, a morphism,
\begin{equation*}
\left( \mu ,\eta \right) :(M,P,\partial )\rightarrow (M^{\prime },P^{\prime
},\partial ^{\prime }),
\end{equation*}%
of crossed modules consists of group homomorphisms $\mu :M\rightarrow
M^{\prime }$ and\ $%
\begin{array}{c}
\eta :P\rightarrow P^{\prime }%
\end{array}%
$ such that%
\begin{equation*}
(i)\text{ }\eta \partial =\partial ^{\prime }\mu \text{ \qquad and \qquad }%
(ii)\text{ }\mu \left( ^{p}m\right) =\text{ }^{\eta (p)}\mu \left( m\right)
\text{ }
\end{equation*}%
for all $p\in P,m\in M.$

Crossed modules and their morphisms form a category, of course. It will
usually be denoted by \textsf{XMod}. We also get obviously a category
\textsf{PXMod} of pre-crossed modules.

There is, for a fixed group $P$, a subcategory \textsf{XMod}$/P$ of \textsf{%
XMod}, which has as objects those crossed modules with $P$ as the
\textquotedblleft base\textquotedblright , i.e., all $(M,P,\partial )$ for
this fixed $P$, and having as morphisms from $\left( M,P,\partial \right) $
to $(M^{\prime },P^{\prime },\partial ^{\prime })$ those $\left( \mu ,\eta
\right) $ in \textsf{XMod} in which $\eta :P\rightarrow P^{\prime }$ is the
identity homomorphism on $P.$

Some standart examples of crossed modules are:

(i) normal subgroup crossed modules $\left( i:N\rightarrow P\right) $ where $%
i$ is an inclusion of a normal subgroup, and the action is given by
conjugation;

(ii) automorphism crossed modules $\left( \chi :M\rightarrow Aut(M)\right) $
in which
\begin{equation*}
\left( \chi m\right) \left( n\right) =mnm^{-1};
\end{equation*}

(iii) Abelian crossed modules $0:M\rightarrow P$ where $M$ is a $P$-module;

(iv) central extension crossed modules $\partial :M\rightarrow P$ where $%
\partial $ is an epimorphism with kernel contained in the center of $M.$

\subsection{Pullback Crossed Modules}\label{g1}
We recall from \cite{BHS} below a presentation of the pullback (co-induced) crossed module.

Let $\phi :P\rightarrow Q$ be a homomorphism of groups and let $%
\begin{array}{c}
\mathcal{N}=(N,Q,v)%
\end{array}%
$ be a crossed module. We define a subgroup
\begin{equation*}
\phi ^{\ast }(\mathcal{N})=N\times _{Q}P=\{(n,p)\mid v(n)=\phi (p)\}
\end{equation*}%
of the product $N\times P.$ This is the usual pullback in the
category of
groups. There is a commutative diagram%
\begin{equation*}
\xymatrix { \phi^*(N) \ar[r]^-{\bar{\phi}} \ar[d]_-{\bar{v}} & N \ar[d]^-v & \\
P \ar[r]_-\phi & Q & }
\end{equation*}%
where $\bar{v}:(n,p)\mapsto p,$ $\bar{\phi}:(n,p)\mapsto n.$ Then
$P$ acts on $\phi ^{\ast }(N)$ via $\phi $ and the diagonal, i.e.
$^{p^{\prime }}(n,p)=(^{\phi (p^{\prime })}n,p^{\prime }pp^{\prime
-1}).$ This gives a $P$-action. Since
\begin{equation*}
\begin{array}{ccl}
(n,p)(n_{1},p_{1})(n,p)^{-1} & = & \left( nn_{1}n^{-1},pp_{1}p^{-1}\right)
\\
& = & \left( ^{v\left( n\right) }n_{1},pp_{1}p^{-1}\right) \\
& = & \left( ^{\phi \left( p\right) }n_{1},pp_{1}p^{-1}\right) \\
& = & ^{\bar{v}(n,p)}(n_{1},p_{1}),%
\end{array}%
\end{equation*}%
we get a crossed module $\phi ^{\ast }(\mathcal{N})=(\phi ^{\ast }(N),P,\bar{%
v})$ which is called the $pullback$ $crossed$ $module$ of $\mathcal{N}$
along $\phi $. This construction satisfies a universal property, analogous
to that of the pullback of groups. To state it, we use also the morphism of
crossed modules%
\begin{equation*}
\left( \bar{\phi},\phi \right) :\phi ^{\ast }(\mathcal{N})\rightarrow
\mathcal{N}.
\end{equation*}

\begin{theorem}
For any crossed module $\mathcal{M}=(M,P,\mu )$ and any morphism of crossed
modules%
\begin{equation*}
(h,\phi ):\mathcal{M\rightarrow N},
\end{equation*}%
there is a unique morphism of crossed $P$-modules $h^{\prime }:\mathcal{M}%
\rightarrow \phi ^{\ast }(\mathcal{N})$ such that the following diagram
commutes%
\begin{equation*}
\xymatrix { M \ar@/^/[drr]^h \ar@/_/[ddr]_{\mu} \ar@{-->}[dr]^{h^{\prime }}
& & & \\ & \phi^*(N) \ar[r]_{\bar{\phi}} \ar[d]^{\bar{v}} & N \ar[d]_v & \\
& P \ar[r]^{\phi} & Q. & }
\end{equation*}%
\end{theorem}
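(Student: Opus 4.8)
The plan is to construct $h'$ explicitly by the only formula the commutativity constraints permit, then verify it is a well-defined morphism of crossed $P$-modules, and finally read off uniqueness from those same constraints. Since the required diagram forces $\bar{\phi}\circ h'=h$ and $\bar{v}\circ h'=\mu$, and since every element of $\phi^{\ast}(N)\subseteq N\times P$ is pinned down by its two coordinates, I would define
\[
h'(m)=(h(m),\mu(m))\qquad(m\in M).
\]

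First I would check that $h'$ genuinely maps into the pullback subgroup $\phi^{\ast}(N)=\{(n,p)\mid v(n)=\phi(p)\}$. This is precisely condition (i) for the crossed module morphism $(h,\phi)$: since $v\circ h=\phi\circ\mu$, we have $v(h(m))=\phi(\mu(m))$, so the pair $(h(m),\mu(m))$ satisfies the defining relation of $\phi^{\ast}(N)$. That $h'$ is a group homomorphism is then immediate, being the pairing of the two homomorphisms $h$ and $\mu$, and the two triangles $\bar{\phi}\circ h'=h$ and $\bar{v}\circ h'=\mu$ hold directly from the definitions $\bar{\phi}(n,p)=n$ and $\bar{v}(n,p)=p$. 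In particular $\bar{v}\circ h'=\mu$ exhibits $h'$ as a morphism over the identity on $P$, i.e.\ a morphism in \textsf{XMod}$/P$.

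The one step that needs genuine care is the $P$-equivariance of $h'$, which is condition (ii) for $h'$ relative to the $P$-action on $\phi^{\ast}(N)$ given by ${}^{p'}(n,p)=({}^{\phi(p')}n,\,p'pp'^{-1})$. Here I would compute $h'({}^{p}m)=(h({}^{p}m),\mu({}^{p}m))$ and rewrite each coordinate separately: the first via condition (ii) for $(h,\phi)$, giving $h({}^{p}m)={}^{\phi(p)}h(m)$, and the second via axiom CM1 for $\mu$, giving $\mu({}^{p}m)=p\,\mu(m)\,p^{-1}$. Comparing the result with ${}^{p}h'(m)={}^{p}(h(m),\mu(m))=({}^{\phi(p)}h(m),\,p\,\mu(m)\,p^{-1})$ shows the two agree. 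This is the only place where both hypotheses on $(h,\phi)$ and the crossed module structure of $\mathcal{M}$ are used at once, and hence the only computation that is not purely formal.

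Finally, uniqueness is forced by the diagram itself. If $h'':\mathcal{M}\to\phi^{\ast}(\mathcal{N})$ is any morphism of crossed $P$-modules making the diagram commute, write $h''(m)=(a(m),b(m))$ with $a(m)\in N$ and $b(m)\in P$. Then $\bar{\phi}\circ h''=h$ gives $a(m)=h(m)$ and $\bar{v}\circ h''=\mu$ gives $b(m)=\mu(m)$, so $h''(m)=(h(m),\mu(m))=h'(m)$ for every $m\in M$. Thus $h'$ is the unique such morphism, which completes the argument.
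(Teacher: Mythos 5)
Your proof is correct. Note that the paper states this theorem without giving any proof at all (it is recalled from Brown--Higgins--Sivera \cite{BHS}), so there is no in-paper argument to compare against; your construction $h'(m)=(h(m),\mu(m))$ --- with membership in $\phi^{\ast}(N)$ guaranteed by $v\circ h=\phi\circ\mu$, equivariance checked via CM1 for $\mu$ together with the equivariance of $h$ against the diagonal action ${}^{p'}(n,p)=({}^{\phi(p')}n,\,p'pp'^{-1})$, and uniqueness forced coordinatewise by the two commuting triangles --- is precisely the standard argument, and it is complete.
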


This can be expressed functorially:

\begin{equation*}
\mathbf{\phi }^{\ast }\mathbf{:}\ \mathsf{XMod}\mathbf{/}Q\mathbf{\ \
\rightarrow }\ \mathsf{XMod}\mathbf{/}P
\end{equation*}%
is a pullback functor. This functor has a left adjoint%
\begin{equation*}
\mathbf{\phi }_{\ast }\mathbf{:}\ \mathsf{XMod}\mathbf{/}P\mathbf{\ \
\rightarrow }\ \mathsf{XMod}\mathbf{/}Q
\end{equation*}
which gives an induced crossed module as follows.

Induced crossed modules were defined by Brown and Higgins in \cite{BROWN and
HIGGINS} and studied further in papers by Brown and Wensley \cite{BROWN and
WENSLEY,BROWN and WENSLEY2}.

We recall from \cite{BHS} below a presentation of the induced crossed module
which is helpful for the calculation of colimits.

\subsection{Induced Crossed Modules}\label{go}

In this section we will briefly explain Brown and Higgins'
construction of induced (pre-)crossed
modules in \cite{BROWN and HIGGINS} to compare $3$-dimensional construction given in section
\ref{i2xm}.

\begin{definition}
For any crossed $P$-module $\mathcal{M}=(M,P,\mu )$ and any homomorphism $%
\phi :P\rightarrow Q$ the crossed module $induced$ by $\phi $ from $\mu $
should be given by:

1$(i)$ a crossed $Q$-module $\phi _{\ast }\left( \mathcal{M}\right) =(\phi
_{\ast }\left( M\right),Q,\phi _{\ast }\mu ),$

$(ii)$ a morphism of crossed modules $\left( f,\phi \right):\mathcal{M}%
\rightarrow \phi_{\ast }\left( \mathcal{M}\right),$ satisfying the dual
universal property that for any morphism of crossed modules%
\begin{equation*}
(h,\phi ):\mathcal{M\rightarrow N}
\end{equation*}%
there is a unique morphism of crossed $Q$-modules $h^{\prime }:\phi _{\ast
}(M)\rightarrow N$ such that the diagram%
\begin{equation*}
\xymatrix { & & N \ar@/^/[ddl]^-v & \\ M \ar[r]_-f \ar@/^/[urr]^-h \ar[d]_-\mu &
\phi _*(M) \ar[d]_-{\phi _*\mu} \ar@{-->}[ur]^-{h^\prime}& & \\ P \ar[r]_-\phi
& Q & & }
\end{equation*}%
commutes.
\end{definition}

The crossed module $\phi _{\ast }\left( \mathcal{M}\right) =(\phi
_{\ast }\left( M\right) $,$Q,\phi _{\ast }\mu ) $ is called the induced crossed module of $\mathcal{M}=(M,P,\mu )$ along $\phi$.

\subsubsection{Construction of Induced Crossed Modules} \label{cxm}

The free $Q$-group $^{Q}M$ generated by $M$ is the kernel of the canonical
morphism $M\ast Q\rightarrow Q$ where $M\ast Q$ is the free product. Hence
it is generated by elements $qmq^{-1}$ with $q\in Q$ and $m\in M.$ It is
equivalent to say that $^{Q}M$ is generated by the set $Q\times M$ with
relation
\begin{equation*}
(q,m_{1})(q,m_{2})=(q,m_{1}m_{2})\ \ \ \ (\ref{cxm}.1)
\end{equation*}%
for $m_{1},m_{2}\in M,q\in Q.$

Then $Q$ acts on $^{Q}M$ by
\begin{equation*}
^{q^{\prime }}(q,m)=(q^{\prime }q,m)
\end{equation*}%
for $m\in M$ and $q^{\prime },q\in Q.$

Let $\mu :M\rightarrow P$ be a crossed module and $\phi
:P\rightarrow Q$ be a morphism of groups. As $^{Q}M$ is the free
$Q$-group, we have the commutative diagram

\[
\xymatrix @R=30pt@C=35pt{ M \ar[r] \ar[d]_-\mu \ar[dr]_-{\phi\mu} & ^Q{M} \ar[d]^-{\mu^\prime} & \\
\ P \ar[r]_-\phi & Q. & }
\]
That is the morphism $\phi\mu $ extends to the morphism ${\mu^\prime
}:\ ^{Q}M\rightarrow Q$ given by

\[
\mu ^{\prime }(q,m)=q\phi \mu (m)q^{-1}
\]

\noindent
for $m\in M,q\in Q.$

Thus $\mu ^{\prime }$ is the free $Q$-pre-crossed module generated
by $M$. To get a free crossed module we have to divide $^{Q}M$ by
the Peiffer subgroup, that is by relation:
\begin{equation*}
(q_{1},m_{1})(q_{2},m_{2})(q_{1},m_{1})^{-1}=(q_{1}\phi \mu
(m_{1})q_{1}^{-1}q_{2},m_{2})\ \ \ \ (\ref{cxm}.2)
\end{equation*}%
for $m_{1},m_{2}\in M$ and $q_{1},q_{2}\in Q.$

Next step, to obtain the \textit{induced }pre-crossed module, we must
identify the action of $P$ on $M$ with the action of $\phi (P)$ on $^{Q}M,$ that is we must divide by relation%
\begin{equation*}
(q,^{p}m)=(q\phi (p),m)
\end{equation*}%
for $m\in M,p\in P$ and $q\in Q.$ Then we have
the induced crossed module denoted by $\phi _{\ast }(M)$, dividing by relation (\ref{cxm}.2).

Thus Brown and Higgins proved the following result in \cite{BROWN and HIGGINS}.
\begin{proposition} \label{go2}
Let $\mu :M\rightarrow P$ be a crossed $P$-module and let $\phi
:P\rightarrow Q$ be a morphism of groups. Then the induced crossed $Q$%
-module $\phi _{\ast }\left( M\right) $ is generated, as a group, by the set
$Q\times M$ with defining relations
\end{proposition}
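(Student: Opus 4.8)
The plan is to verify that the group $\phi_\ast(M)$ presented by the generating set $Q\times M$ together with the three imposed relations --- the multiplicativity relation $(\ref{cxm}.1)$, the Peiffer relation $(\ref{cxm}.2)$, and the action-identification relation $(q,{}^{p}m)=(q\phi(p),m)$ --- carries a crossed $Q$-module structure satisfying the universal property of the Definition above. First I would equip the presented group with its structure maps, each defined on generators: the $Q$-action ${}^{q'}(q,m)=(q'q,m)$, the boundary $\phi_\ast\mu\colon(q,m)\mapsto q\,\phi\mu(m)\,q^{-1}$, and the comparison homomorphism $f\colon M\to\phi_\ast(M)$, $m\mapsto(1,m)$. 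Since these are prescribed only on generators, the first (routine but essential) task is to confirm that each is compatible with all three defining relations, so that it descends to the quotient group.

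Next I would establish that $(\phi_\ast(M),Q,\phi_\ast\mu)$ is genuinely a crossed $Q$-module. Axiom CM1 follows by a direct computation from the boundary formula together with the action ${}^{q'}(q,m)=(q'q,m)$, whereas CM2 is exactly the content of the Peiffer relation $(\ref{cxm}.2)$: that relation was imposed precisely so as to force ${}^{\phi_\ast\mu(q_1,m_1)}(q_2,m_2)=(q_1,m_1)(q_2,m_2)(q_1,m_1)^{-1}$. I would then verify that $(f,\phi)$ is a morphism of crossed modules; the identity $\phi_\ast\mu\circ f=\phi\mu$ is immediate, and the equivariance $f({}^{p}m)={}^{\phi(p)}f(m)$ is just the action-identification relation read off on generators.

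The core of the argument is the universal property. Given any morphism of crossed modules $(h,\phi)\colon\mathcal{M}\to\mathcal{N}$ with $\mathcal{N}=(N,Q,v)$, I would define $h'\colon\phi_\ast(M)\to N$ on generators by $h'(q,m)={}^{q}h(m)$. Uniqueness is forced at once: since $\phi_\ast(M)$ is generated by the elements $(q,m)={}^{q}f(m)$, any morphism $h'$ of crossed $Q$-modules with $h'f=h$ must satisfy $h'(q,m)={}^{q}h'(f(m))={}^{q}h(m)$, leaving no freedom in the definition. It then remains to check that this $h'$ exists as a well-defined group homomorphism respecting the three relations, that it is $Q$-equivariant, and that it is compatible with the boundaries, $v\circ h'=\phi_\ast\mu$ (using CM1 in $\mathcal{N}$ together with $vh=\phi\mu$).

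The step I expect to be the main obstacle is the well-definedness of $h'$ against the Peiffer and action-identification relations, since this is where the hypotheses on $\mathcal{N}$ and on $h$ are consumed. Respecting $(\ref{cxm}.2)$ requires the Peiffer identity CM2 in $\mathcal{N}$ (applied after rewriting $v({}^{q_1}h(m_1))=q_1\phi\mu(m_1)q_1^{-1}$ via CM1 and the morphism identity), and respecting $(q,{}^{p}m)=(q\phi(p),m)$ requires the equivariance $h({}^{p}m)={}^{\phi(p)}h(m)$ of the given morphism. Once these compatibilities are secured, $h'$ is automatically a morphism of crossed $Q$-modules, and together with the uniqueness already observed this establishes the universal property and hence the asserted presentation of $\phi_\ast(M)$.
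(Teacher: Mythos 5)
Your proposal is correct and follows essentially the same route as the paper: the paper obtains $\phi_{\ast}(M)$ by exactly this presentation (free $Q$-group on $M$, then the Peiffer relation, then the identification $(q,{}^{p}m)=(q\phi(p),m)$), equips it with the same action ${}^{q'}(q,m)=(q'q,m)$, boundary $\phi_{\ast}\mu(q,m)=q\phi\mu(m)q^{-1}$ and canonical map $m\mapsto(1,m)$, and defers the universal-property verification to Brown--Higgins. Your direct check of that universal property via $h'(q,m)={}^{q}h(m)$, with well-definedness against relation (iii) resting on CM1/CM2 in $\mathcal{N}$ and against relation (ii) on the equivariance of $h$, is precisely the standard completion of that construction.
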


$%
\begin{array}{cl}
(i) & (q,m_{1})(q,m_{2})=(q,m_{1}m_{2}), \\
(ii) & (q,^{p}m)=(q\phi (p),m), \\
(iii) & (q_{1},m_{1})(q_{2},m_{2})(q_{1},m_{1})^{-1}=(q_{1}\phi \mu
(m_{1})q_{1}^{-1}q_{2},m_{2})%
\end{array}%
$ \newline

\noindent for $m,m_{1},m_{2}\in M,$ $q,q_{1},q_{2}\in Q$ and $p\in P.$

The morphism $\phi _{\ast }\mu :\phi _{\ast }\left( M\right) \rightarrow Q$
is given by $\phi _{\ast }\mu (q,m)=q\phi \mu (m)q^{-1}$, the action of $Q$
on $\phi _{\ast }\left( M\right) $ by $^{q}(q_{1},m)=(qq_{1},m),$ and the
canonical morphism $\phi ^{\prime }:M\rightarrow \phi _{\ast }\left(
M\right) $ by $\phi ^{\prime }(m)=(1,m).$

If $\phi :P\rightarrow Q$ is an epimorphism,\ the induced crossed module $%
\left( \phi _{\ast }\left( M\right),Q,\phi _{\ast }\mu \right) $ has a
simpler description.

\begin{proposition} \label{gs1}
$\left( \cite{BROWN and HIGGINS}\text{, Proposition }9\right) $ If $\phi
:P\rightarrow Q$ is an epimorphism, and $\mu :M\rightarrow P$ is a crossed
module, then $\phi _{\ast }(M)\cong M/[K,M],$ where $K=$Ker$\phi $, and $%
[K,M]$ denotes the subgroup of $M$ generated by all\ $^{k}mm^{-1}$ for all $%
m\in M,k\in K.$
\end{proposition}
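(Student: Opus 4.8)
The plan is to use the explicit presentation of $\phi_*(M)$ given in Proposition \ref{go2} and to build a pair of mutually inverse group homomorphisms between $\phi_*(M)$ and $M/[K,M]$, where $K=\mathrm{Ker}\,\phi$. Before constructing them I would record two structural facts about $[K,M]$. Since $K$ is normal in $P$, for $p\in P$, $k\in K$, $m\in M$ one has $^{p}(^{k}mm^{-1})=\,^{pkp^{-1}}(^{p}m)(^{p}m)^{-1}$ with $pkp^{-1}\in K$, so $[K,M]$ is stable under the $P$-action. Moreover, using CM2 in the form $nxn^{-1}=\,^{\mu(n)}x$, a short rewriting of $n(^{k}mm^{-1})n^{-1}$ produces $^{k'}m'(m')^{-1}$ with $k'=\mu(n)k\mu(n)^{-1}\in K$ and $m'=\,^{\mu(n)}m$, so $[K,M]$ is normal in $M$ and the quotient $M/[K,M]$ is a genuine group; I write $\overline{m}$ for the class of $m$.

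For the forward map I would define $\psi:\phi_*(M)\to M/[K,M]$ on the generating set by $\psi(q,m)=\overline{\,^{p}m\,}$, choosing any $p\in P$ with $\phi(p)=q$ (possible since $\phi$ is onto). This is independent of the preimage: two choices differ by some $k\in K$, and $^{pk}m=\,^{p}(^{k}m)\equiv\,^{p}m$ modulo the $P$-stable subgroup $[K,M]$. I then verify that $\psi$ kills the three defining relators. Relation $(i)$ holds because $^{p}(\,\cdot\,)$ is an automorphism of $M$, and relation $(ii)$ holds because $^{p}(^{p_{0}}m)=\,^{pp_{0}}m$ while $\phi(pp_{0})=q\phi(p_{0})$, so both sides map to $\overline{\,^{pp_{0}}m\,}$.

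The delicate verification is relation $(iii)$, and this is where I expect the main work to lie. Evaluating each side of $(iii)$ on generators in $M/[K,M]$, the left-hand side gives $\overline{\,^{p_{1}}m_{1}\,}\,\overline{\,^{p_{2}}m_{2}\,}\,\overline{\,^{p_{1}}m_{1}\,}^{-1}$, which by CM2 equals $\overline{\,^{\mu(^{p_{1}}m_{1})}(^{p_{2}}m_{2})\,}$; applying CM1 in the form $\mu(^{p_{1}}m_{1})=p_{1}\mu(m_{1})p_{1}^{-1}$ turns this into $\overline{\,^{p_{1}\mu(m_{1})p_{1}^{-1}p_{2}}m_{2}\,}$. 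Since $\phi(p_{1}\mu(m_{1})p_{1}^{-1}p_{2})=q_{1}\phi\mu(m_{1})q_{1}^{-1}q_{2}$, this is precisely the image under $\psi$ of the right-hand side $(q_{1}\phi\mu(m_{1})q_{1}^{-1}q_{2},m_{2})$, so the relation is respected and $\psi$ is a well-defined homomorphism.

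For the inverse I would use the canonical homomorphism $\phi':M\to\phi_*(M)$, $\phi'(m)=(1,m)$. By relation $(ii)$, for $k\in K$ we get $(1,\,^{k}m)=(\phi(k),m)=(1,m)$ since $\phi(k)=1$, whence $\phi'(^{k}mm^{-1})=1$; thus $\phi'$ annihilates $[K,M]$ and factors as $\overline{\phi'}:M/[K,M]\to\phi_*(M)$, $\overline{m}\mapsto(1,m)$. Checking the composites on generators, $\psi\overline{\phi'}(\overline{m})=\psi(1,m)=\overline{\,^{1}m\,}=\overline{m}$ (taking $p=1$), and $\overline{\phi'}\psi(q,m)=\overline{\phi'}(\overline{\,^{p}m\,})=(1,\,^{p}m)=(\phi(p),m)=(q,m)$ by relation $(ii)$; as the $(q,m)$ generate $\phi_*(M)$, both composites are identities, giving $\phi_*(M)\cong M/[K,M]$. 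Finally one notes that $\overline{\phi'}$ intertwines the descended $Q$-action $^{q}\overline{m}=\overline{\,^{p}m\,}$ and boundary $\overline{m}\mapsto\phi\mu(m)$ with the structure on $\phi_*(M)$, so the isomorphism is in fact one of crossed $Q$-modules.
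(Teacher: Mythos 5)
Your proof is correct, but there is nothing in the paper to compare it against: the paper states this result verbatim as Proposition 9 of Brown--Higgins and gives no proof of its own. Your argument, built on the presentation of $\phi_{\ast}(M)$ in Proposition \ref{go2}, is complete and sound: the $P$-stability and $M$-normality of $[K,M]$ (via CM2), the independence of $\psi(q,m)=\overline{{}^{p}m}$ from the choice of $p$ with $\phi(p)=q$, the verification of all three relators (with CM1 and CM2 entering exactly where they must, in relator $(iii)$), and the two composite computations using relator $(ii)$ are all handled correctly. For comparison, the original Brown--Higgins argument takes a slicker route that never touches the presentation: one observes that $M/[K,M]$ is itself a crossed $Q$-module (the $P$-action descends and factors through $P/K\cong Q$ because $K$ acts trivially on the quotient, and $\phi\mu$ descends to the boundary), and then verifies the universal property of the induced crossed module directly, since any crossed module morphism $(h,\phi):(M,P,\mu)\rightarrow(N,Q,\nu)$ satisfies $h({}^{k}m)={}^{\phi(k)}h(m)=h(m)$ for $k\in K$, hence kills $[K,M]$ and factors uniquely through the quotient. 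That approach avoids relator-checking entirely and exhibits $M/[K,M]$ as $\phi_{\ast}(M)$ by uniqueness of objects satisfying a universal property; yours buys an explicit pair of mutually inverse homomorphisms and makes the isomorphism concrete on generators, at the cost of the (correctly executed) relation checks.
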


\begin{proposition}\label{gs2}
$\left( \cite{BROWN and HIGGINS}\text{, Proposition }10\right) $ If
$\phi :P\rightarrow Q$ is an injection and $\mu :M\rightarrow P$ is
a crossed module, let $T$ be a left transversal of $\phi (P)$ in
$Q,$ and let $B$ be the free product of groups $_{T}M$ $(t\in T)$
each isomorphic with $M$ by an isomorphism $m\mapsto $ $_{t}m$
$(m\in M).$ Let $q\in Q$ acts on $B$ by the rule $^{q}(_{t}m)=$
$_{u}(^{p}m)$ where $p\in P,u\in T,$ and $qt=u\phi (p).$ Let $\delta
:B\rightarrow Q$ be defined by $_{t}m\mapsto $ $t(\phi \mu
m)t^{-1}.$ Then $\mu _{\ast }(M)=B/S$ where $S$ is the normal
closure in $B$ of the elements $bcb^{-1}(^{\delta b}c^{-1})$ for
$b,c\in B.$
\end{proposition}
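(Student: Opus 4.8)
The plan is to prove that $(B/S,Q,\bar\delta)$, equipped with the stated $Q$-action and the homomorphism $\bar\delta$ induced by $\delta$, satisfies the universal property in the Definition of $\phi_\ast(M)$; since that property characterises the induced crossed module up to isomorphism, this yields $\phi_\ast(M)\cong B/S$. Throughout I use that $\phi$ is injective, so that each $q\in Q$ has a \emph{unique} expression $q=t\phi(p)$ with $t\in T$ and $p\in P$ (the coset $q\phi(P)$ selects $t$, and injectivity of $\phi$ then determines $p$); I may assume $1\in T$, taking $1$ as the representative of the coset $\phi(P)$. The well-definedness of the action ${}^q({}_t m)={}_u({}^p m)$, where $qt=u\phi(p)$, and of $\delta$ rests on this normal form.

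First I would check that $(B,Q,\delta)$ is a pre-crossed module. That the rule ${}^q({}_t m)={}_u({}^p m)$ is a genuine $Q$-action is a transversal bookkeeping computation using associativity of the normal form, and each ${}^q(-)$ is an automorphism because it permutes the free factors ${}_t M$ and acts on each by an automorphism of $M$. For CM1, on a generator ${}_t m$ one computes $\delta({}^q({}_t m))=u\,\phi\mu({}^p m)\,u^{-1}$; rewriting $\phi\mu({}^p m)=\phi(p)\,\phi\mu(m)\,\phi(p)^{-1}$ by CM1 for $\mu$ and using $u\phi(p)=qt$ gives $\delta({}^q({}_t m))=q\,\delta({}_t m)\,q^{-1}$, so CM1 holds. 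Because the Peiffer element $P(b,c)=bcb^{-1}({}^{\delta b}c^{-1})$ then satisfies ${}^qP(b,c)=P({}^q b,{}^q c)$ — this uses exactly the identity $\delta({}^q b)=q\,\delta(b)\,q^{-1}$, so that ${}^q({}^{\delta b}c)={}^{\delta({}^q b)}({}^q c)$ — the subgroup $S$ is $Q$-invariant, the action and $\delta$ descend to $B/S$ and $\bar\delta\colon B/S\to Q$, and by the very definition of $S$ the identity CM2 holds in $B/S$. Thus $(B/S,Q,\bar\delta)$ is a crossed $Q$-module.

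Finally I would verify the universal property. Put $f\colon M\to B/S$, $f(m)=\overline{{}_1 m}$; using the normal form $\phi(p)=1\cdot\phi(p)$ one checks $f({}^p m)={}^{\phi(p)}f(m)$ and $\bar\delta f=\phi\mu$, so $(f,\phi)$ is a morphism $\mathcal M\to(B/S,Q,\bar\delta)$. Given any crossed $Q$-module $\mathcal N=(N,Q,\nu)$ and morphism $(h,\phi)\colon\mathcal M\to\mathcal N$, I would define $h'\colon B\to N$ on free factors by $h'({}_t m)={}^t h(m)$. A direct check shows $h'$ is $Q$-equivariant and satisfies $\nu h'=\delta$; equivariance together with CM2 in $N$ then forces $h'(P(b,c))=1$, so $h'$ descends to $\bar h'\colon B/S\to N$, a morphism of crossed $Q$-modules over $\mathrm{id}_Q$ with $\bar h' f=h$. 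Uniqueness follows because ${}_t m={}^t({}_1 m)$, so $B/S$ is generated as a $Q$-group by $f(M)$ and any such $\bar h'$ is determined on generators. By the universal property this identifies $B/S$ with $\phi_\ast(M)$.

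The main obstacle will be the passage to the quotient: one must confirm that $S$ is genuinely stable under the $Q$-action, so that $B/S$ carries a crossed-module structure at all, and that $S$ is exactly the set of relations required — no more, no less. The ``no more'' direction is the delicate point, and it is precisely what the universal-property argument secures: showing that an arbitrary compatible $h$ extends over $B/S$ forces $h'$ to annihilate $S$ via CM2 in the target, which is where injectivity of $\phi$, through the unique normal form underlying the action, enters in an essential way.
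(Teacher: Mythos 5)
Your proposal is correct, but there is a wrinkle in the comparison: the paper gives no proof of this proposition at all. It is recalled verbatim as background, with the citation to Proposition 10 of \cite{BROWN and HIGGINS}, in order to motivate its $3$-dimensional analogue (Proposition \ref{ug2}); so your argument is not an alternative to an in-paper proof but a reconstruction of the omitted classical one. As such it is essentially the standard (Brown--Higgins) argument: show $(B/S,Q,\bar{\delta})$ is a crossed $Q$-module and then verify the universal property of Definition 2, which characterises the induced crossed module up to isomorphism. Your key steps all check out: injectivity of $\phi$ gives the unique normal form $q=t\phi(p)$ ($t\in T$, $p\in P$), which is exactly what makes the $Q$-action on $B$ well defined; CM1 on the generators ${}_{t}m$ together with the fact that each ${}^{q}(-)$ is an automorphism of the free product gives $\delta({}^{q}b)=q\delta(b)q^{-1}$ for all $b\in B$, hence ${}^{q}\bigl(bcb^{-1}({}^{\delta b}c^{-1})\bigr)=({}^{q}b)({}^{q}c)({}^{q}b)^{-1}({}^{\delta({}^{q}b)}({}^{q}c)^{-1})$ and $S$ is $Q$-invariant; the extension $h'({}_{t}m)={}^{t}h(m)$ satisfies $\nu h'=\delta$ and is $Q$-equivariant, so CM2 in $N$ annihilates the Peiffer elements and $h'$ factors through $B/S$; and uniqueness follows from ${}_{t}m={}^{t}({}_{1}m)$, so that $B/S$ is generated as a $Q$-group by $f(M)$. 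Two small points you pass over, both harmless: one should also record that $\delta$ vanishes on $S$ (immediate from CM1, since $\delta$ of a Peiffer element is $\delta(b)\delta(c)\delta(b)^{-1}\cdot\delta(b)\delta(c)^{-1}\delta(b)^{-1}=1$) so that $\bar{\delta}$ exists on the quotient, and the normalisation $1\in T$ is a legitimate change of transversal. Finally, note that the statement's ``$\mu_{\ast}(M)=B/S$'' is a misprint for $\phi_{\ast}(M)=B/S$, which is what you in fact prove.
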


\section{Two-Crossed Modules}

Conduch\'{e} \cite{CONDUCHE} described the notion of a $2$-crossed module as
a model of connected homotopy $3$-types.

A $2$\textit{-crossed module} is a normal complex of groups $L\overset{%
\partial _{2}}{\rightarrow }M\overset{\partial _{1}}{\rightarrow
}P$,
that is $\partial _{2}(L)\unlhd M,\partial _{1}(M)\unlhd P$ and
$\partial _{1}\partial _{2}=1,$ together with an action of $P$ on
all three groups and a mapping
\begin{equation*}
\left\{ -,-\right\} :M\times M\rightarrow L
\end{equation*}%
which is often called the Peiffer lifting such that the action of $P$ on
itself is by conjugation, $\partial _{2}$ and $\partial _{1}$ are $P$%
-equivariant.

$%
\begin{array}{crcl}
\mathbf{PL1}: & \partial _{2}\left\{ m_{0},m_{1}\right\} & = &
m_{0}m_{1}m_{0}^{-1}\left( ^{\partial _{1}m_{0}}m_{1}^{-1}\right) \\
\mathbf{PL2}: & \left\{ \partial _{2}l_{0},\partial _{2}l_{1}\right\} & = &
\left[ l_{0},l_{1}\right] \\
\mathbf{PL3}: & \left\{ m_{0},m_{1}m_{2}\right\} & = &
^{m_{0}m_{1}m_{0}^{-1}}\left\{ m_{0},m_{2}\right\} \left\{
m_{0},m_{1}\right\} \\
& \left\{ m_{0}m_{1},m_{2}\right\} & = & \left\{
m_{0},m_{1}m_{2}m_{1}^{-1}\right\} \left( ^{\partial _{1}m_{0}}\left\{
m_{1},m_{2}\right\} \right) \\
\mathbf{PL4}: & a)\qquad \text{\qquad }\left\{ \partial _{2}l,m\right\} & =
& l\left( ^{m}l^{-1}\right) \\
& b)\qquad \qquad \left\{ m,\partial _{2}l\right\} & = & ^{m}l\left(
^{\partial _{1}m}l^{-1}\right) \\
& \text{or }\left\{ \partial _{2}l,m\right\} \left\{ m,\partial _{2}l\right\}
& = & l\left( ^{\partial _{1}m}l^{-1}\right) \\
\mathbf{PL5}: & ^{p}\left\{ m_{0},m_{1}\right\} & = & \left\{
^{p}m_{0},^{p}m_{1}\right\}%
\end{array}%
$

\noindent for all $m,m_{0},m_{1},m_{2}\in M,l,l_{0},l_{1}\in L$ and $p\in P.$\\
Note that we have not specified how $M$ acts on $L$. In \cite{CONDUCHE},
that as follows: if $m\in M$ and $l\in L$, define
\begin{equation*}
^{m}l=l\left\{ \partial _{2}l^{-1},m\right\} .
\end{equation*}%
From this equation $\left( L,M,\partial _{2}\right) $ becomes a crossed
module$.$

We denote such a $2$-crossed module of groups by $\left\{ L,M,P,\partial
_{2},\partial _{1}\right\} .$

A morphism of $2$-crossed modules is given by a diagram

\begin{equation*}
\xymatrix { L \ar[r]^{\partial_{2}} \ar[d]_{f_2} & M \ar[r]^{\partial_{1}}
\ar[d]_{f_1} & P \ar[d]_{f_0} & \\ \ L' \ar[r]_{\partial^{'}_{2}} & M'
\ar[r]_{\partial^{'}_{1}} & P' & }
\end{equation*}%
where $%
\begin{array}{c}
f_{0}\partial _{1}=\partial _{1}^{\prime }f_{1}%
\end{array}%
$,$%
\begin{array}{c}
f_{1}\partial _{2}=\partial _{2}^{\prime }f_{2}%
\end{array}%
$%
\begin{equation*}
\begin{array}{ccc}
f_{1}\left( ^{p}m\right) =\text{ }^{f_{0}\left( p\right) }f_{1}\left(
m\right) & , & f_{2}\left( ^{p}l\right) =\text{ }^{f_{0}\left( p\right)
}f_{2}\left( l\right)%
\end{array}%
\end{equation*}%
and
\begin{equation*}
\left\{ -,-\right\} \left( f_{1}\times f_{1}\right) =f_{2}\left\{ -,-\right\}
\end{equation*}

\noindent \ for all $m\in M,l\in L$ and $p\in P.$

These compose in an obvious way giving a category which we will
denote by \textsf{X}$_{2}$\textsf{Mod}$.$ There is, for a fixed
group $P$, a subcategory \textsf{X}$_{2}$\textsf{Mod}${/P}$ of
\textsf{X}$_{2}$\textsf{Mod} which has as objects those crossed
modules with $P$ as the \textquotedblleft base\textquotedblright ,
i.e., all $\left\{ L,M,P,\partial _{2},\partial _{1}\right\} $ for
this fixed $P$, and having as morphism from $\left\{ L,M,P,\partial
_{2},\partial _{1}\right\} $ to $\left\{ L^{\prime },M^{\prime
},P^{\prime },\partial _{2}^{\prime },\partial _{1}^{\prime
}\right\} $ those $\left( f_{2},f_{1},f_{0}\right) $ in \textsf{X}$_{2}$%
\textsf{Mod} in which $f_{0}:P\rightarrow P^{\prime }$ is the
identity homomorphism on $P.$ Similarly we get a subcategory
\textsf{X}$_{2}$\textsf{Mod}${/(M,P)}$ of
\textsf{X}$_{2}$\textsf{Mod} for a fixed pre-crossed module
$M\rightarrow P$.

Some remarks on trivial Peiffer lifting of $2$-crossed modules given by Porter in
\cite{PORTER} are:

\noindent Suppose we have a $2$-crossed module%
\begin{equation*}
L\overset{\partial _{2}}{\rightarrow }M\overset{\partial _{1}}{\rightarrow }%
P,
\end{equation*}%
with extra condition that $\left\{ m,m^{\prime }\right\} =1$ for all $%
m,m^{\prime }\in M.$ The obvious thing to do is to see what each of the
defining properties of a $2$-crossed module give in this case.

(i) There is an action of $P$ on $L$ and$\ M$ and the $\partial $s are $P$%
-equivariant. (This gives nothing new in our special case.)

(ii) $\left\{ -,-\right\} $ is a lifting of the Peiffer commutator so if $%
\left\{ m,m^{\prime }\right\} =1,$ the Peiffer identity holds for $\left(
M,P,\partial _{1}\right) ,$ i.e. that is a crossed module;

(iii) if $l,l^{\prime }\in L,$ then $1=\left\{ \partial _{2}l,\partial
_{2}l^{\prime }\right\} =\left[ l,l^{\prime }\right] ,$ so $L$ is Abelian

\noindent and,

(iv) as $\left\{ -,-\right\} $ is trivial $^{\partial _{1}m}l^{-1}=l^{-1},$
so $\partial M$ has trivial action on $L.$

\noindent Axioms PL$3$ and PL$5$ vanish.

\textbf{Examples of }$\mathbf{2}$\textbf{-Crossed Modules}

\noindent $1.$\ Let $M\overset{{\partial
_{1} }}{\rightarrow }P$ be a pre-crossed
module$.$ Consider the Peiffer subgroup $\langle M,M\rangle \subset M$,
generated by the Peiffer commutators%
\begin{equation*}
\langle m,m^{\prime }\rangle =mm^{\prime }m^{-1}(^{\partial
_{1}(m)}m^{\prime -1})
\end{equation*}

\noindent for all $m,m^{\prime }\in M.$ Then
\begin{equation*}
\langle M,M\rangle \overset{\partial _{2}}{\rightarrow }M\overset{\partial
_{1}}{\rightarrow }P
\end{equation*}%
is a $2$-crossed module with the Peiffer lifting $\left\{ m,m^{\prime
}\right\} =\langle m,m^{\prime }\rangle ,$ \cite{Martins}.

\noindent $2.$\ Any crossed module gives a $2$-crossed module. If $\left(
M,P,\partial \right) $\textbf{\ }is a crossed module, the resulting sequence
\begin{equation*}
L\rightarrow M\rightarrow P
\end{equation*}%
is a $2$-crossed module by taking $L=1.$ This is functorial and \textsf{XMod}
can be considered to be a full category of \textsf{X}$_{2}$\textsf{Mod} in this way.
It is a reflective subcategory since there is a reflection functor obtained
as follows:

If $L\overset{\partial _{2}}{\rightarrow }M\overset{\partial _{1}}{%
\rightarrow }P$ is a $2$-crossed module, then Im$\partial _{2}$ is a normal
subgroup of $M$ and there is an induced crossed module structure on $%
\partial _{1}:\dfrac{M}{\text{Im}\partial _{2}}\rightarrow P,$ $\left( \text{%
c.f. }\cite{PORTER}\right) $.

\begin{remark}
$1.$ Another way of encoding $3$-types is using the noting of a
crossed square by\ Guin-Wal\'{e}ry and Loday, \cite{wl} . \\

\noindent $2.$ A crossed square can be considered as a complex of crossed
modules of length one and
thus, Conduch\'{e}, gave a direct proof from crossed squares to $%
2$-crossed modules. For this construction see \cite{cond}.
\end{remark}

\section{Pullback Two-Crossed Modules}

In this section we introduce the notion of a pullback $2$-crossed module,
which extends a pullback crossed module defined by Brown-Higgins, $\cite%
{BROWN and HIGGINS}$. The importance of the \textquotedblleft
pullback\textquotedblright\ is that it enables us to move from crossed $Q$%
-module to crossed $P$-module,\ when a morphism of groups $\phi
:P\rightarrow Q$ is given.

\begin{definition}
Given a $2$-crossed module $\left\{ H,N,Q,\partial _{2},\partial
_{1}\right\} $ and a morphism of groups $\phi :P\rightarrow Q,$ the
pullback $2$-crossed module is given by

(i) a $2$-crossed module $\phi ^{\ast }\left\{ H,N,Q,\partial _{2},\partial
_{1}\right\} =\left\{ \phi ^{\ast }(H),\phi ^{\ast }(N),P,\partial
_{2}^{\ast },\partial _{1}^{\ast }\right\} $

(ii) given any morphism of $2$-crossed modules%
\begin{equation*}
\left( f_{2},f_{1},\phi \right) :\left\{ B_{2},B_{1},P,\partial _{2}^{\prime
},\partial _{1}^{\prime }\right\} \rightarrow \left\{ H,N,Q,\partial
_{2},\partial _{1}\right\} ,
\end{equation*}%
there is a unique $\left( f_{2}^{\ast },f_{1}^{\ast },id_{P}\right) $ $2$%
-crossed module morphism that makes the following diagram commute:
\begin{equation*}
\xymatrix { & & (B_2,
B_1,P,{\partial_2}^{\prime},{\partial_1}^{\prime})
\ar@{-->}[dll]_{(f_2^*,f_1^*,id_P)} \ar[d]^{(f_2,f_1,\phi)} & \\ \
(\phi^*(H),\phi^*(N), P,
\partial_{2}^{*},\partial_{1}^{*})\ar[rr]_-{(\phi^{\prime \prime},\phi^{\prime},\phi)} & & (H, N,
Q, \partial_{2},\partial_{1})& }
\end{equation*}%
or more simply as

\[
\xymatrix @R=20pt@C=25pt {  B_2  \ar@{-->}[dr]_{f_2^\ast} \ar@/^/[drrr]^{f_2} \ar[dd]_{{\partial_2}^{\prime}} & & & & \\
\ & {\phi^\ast}(H) \ar[dd]_{\partial_2^\ast} \ar[rr]_{\phi^{\prime \prime}}& & H \ar[dd]^{\partial_2}& \\
\ B_1  \ar@{-->}[dr]_{f_1^\ast} \ar@/^/[drrr]^{f_1} \ar[dd]_{{\partial_1}^{\prime}} & & & & \\
\ & {\phi^*}(N) \ar[dd]_{\partial_1^\ast} \ar[rr]_{\phi^{\prime}}& & N \ar[dd]^{\partial_1}& \\
\ P \ar@{=}[dr]_{id_P} \ar@/^/[drrr]^\phi  & & & & \\
\ & P \ar[rr]_\phi & & Q . & }
\]
\end{definition}
\subsubsection{Construction of Pullback $2$-Crossed Modules} \label{cxm1}

We can construct pullback $2$-crossed modules by using the notion of the pullback pre-crossed module in \cite{BHS}. Let
\begin{equation*}
(f_{2},f_{1},\phi ):\left\{ B_{2},B_{1},P,\partial _{2}^{\prime
},\partial _{1}^{\prime }\right\} \rightarrow \left\{ H,N,Q,\partial
_{2},\partial _{1}\right\}
\end{equation*}%
be a $2$-crossed module morphism and we consider $(\phi^{\ast}(N),P,\partial_1^{\ast})$ the pullback pre-crossed
module of $N\rightarrow Q$ by $\phi $ given in subsection
\ref{g1}. Since $\partial _{1}\partial
_{2}f_{2}=\phi \partial _{1}^{\prime }\partial _{2}^{\prime }=1,$ the image $%
f_{2}(B_{2})$ must be contained in $\partial _{2}^{-1}(Ker\partial
_{1}).$ Whence the pullback $2$-crossed module associated to $\phi $
is
\[
\partial _{2}^{-1}(Ker\partial _{1})\overset{\partial
_{2}^{\ast }}{\rightarrow }\phi ^{\ast }(N)\overset{\partial
_{1}^{\ast }}{\rightarrow }P
\]
where $\partial _{2}^{\ast
}(h)=(\partial _{2}(h),1)$\ for $h \in \partial _{2}^{-1}(Ker\partial _{1})$.

\begin{theorem}
\label{ug3}

If $H\overset{\partial _{2}}{\rightarrow }N\overset{\partial _{1}}{%
\rightarrow }Q$ is a $2$-crossed module and $\phi :P\rightarrow Q$
is a morphism of groups then
\begin{equation*}
\partial _{2}^{-1}(Ker\partial _{1})\overset{\partial _{2}^{\ast }}{%
\rightarrow }\phi ^{\ast }(N)\overset{\partial _{1}^{\ast
}}{\rightarrow }P
\end{equation*}%
is a pullback $\ 2$-crossed module where
\begin{equation*}
\begin{array}{ccc}
\partial _{2}^{\ast
}(h)=(\partial _{2}(h),1)&  &\partial _{1}^{\ast }(n,p)=p,
\end{array}%
\end{equation*}
the action of $P$ on $\phi
^{\ast }\left( N\right)$ and $\partial _{2}^{-1}(Ker\partial _{1})$ by $%
^{p}(n,p^{\prime })=\left( ^{\phi \left( p\right) }n,pp^{\prime
}p^{-1}\right) $ and $^{p}h=\ ^{\phi (p)}h,$ respectively.
\end{theorem}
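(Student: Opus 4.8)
The plan is to exhibit all the structure maps, verify that $\partial_2^{-1}(\mathrm{Ker}\,\partial_1)\to \phi^{\ast}(N)\to P$ is a $2$-crossed module, and then check the universal property against the pullback pre-crossed module of Subsection~\ref{g1}. First I would observe that, because $\partial_1\partial_2=1$ in the given $2$-crossed module, one has $\partial_2^{-1}(\mathrm{Ker}\,\partial_1)=H$, so the top group is just $H$, and the map $\partial_2^{\ast}(h)=(\partial_2(h),1)$ does land in $\phi^{\ast}(N)=\{(n,p)\mid \partial_1(n)=\phi(p)\}$ since $\partial_1\partial_2(h)=1=\phi(1)$. Recall from Subsection~\ref{g1} that $(\phi^{\ast}(N),P,\partial_1^{\ast})$, with $\partial_1^{\ast}(n,p)=p$ and $P$-action $^{p}(n,p')=(^{\phi(p)}n,pp'p^{-1})$, is the pullback pre-crossed module of $N\to Q$ along $\phi$; in particular $\partial_1^{\ast}$ is $P$-equivariant with image $\phi^{-1}(\partial_1(N))\unlhd P$. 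For the Peiffer lifting I would set $\{(n_1,p_1),(n_2,p_2)\}^{\ast}=\{n_1,n_2\}\in H$, and let $P$ act on $H$ by $^{p}h={}^{\phi(p)}h$. The only well-definedness points needing care are that the $P$-action preserves $\phi^{\ast}(N)$ (which follows from $Q$-equivariance of $\partial_1$ together with the defining relation $\partial_1(n)=\phi(p)$) and that $\partial_2^{\ast}$ is valued in $\phi^{\ast}(N)$.

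Next I would verify the normal-complex and equivariance conditions: $\partial_1^{\ast}\partial_2^{\ast}=1$ is immediate from the second coordinate, and $\partial_2^{\ast}(H)\unlhd\phi^{\ast}(N)$ follows by conjugating $(\partial_2(h),1)$ by $(n,p)$ and using that $(H,N,\partial_2)$ is a crossed module (CM1), so that $n\partial_2(h)n^{-1}=\partial_2(^{n}h)$. Equivariance of $\partial_2^{\ast}$ and $\partial_1^{\ast}$ under $P$ reduces to the $Q$-equivariance of $\partial_2$ and $\partial_1$. The heart of this part is checking PL1--PL5 for $\{-,-\}^{\ast}$. A short computation shows the induced action of $\phi^{\ast}(N)$ on $H$ is $^{(n,p)}h={}^{n}h$, and then each axiom collapses onto the corresponding axiom of the original $2$-crossed module once the components $p$ are tracked. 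The crucial observation is that wherever the original axioms PL1, PL3 and PL4 involve the action $^{\partial_1(n)}(-)$, the pullback relation $\partial_1(n)=\phi(p)$ lets me replace it by $^{\phi(p)}(-)={}^{p}(-)$, which is exactly the $P$-action appearing on the pullback side; this is precisely why the defining equation of $\phi^{\ast}(N)$ is what is needed. PL2 and PL5 follow directly since $\{-,-\}^{\ast}$ forgets the $P$-coordinates.

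For the universal property, given a morphism $(f_2,f_1,\phi)\colon\{B_2,B_1,P,\partial_2',\partial_1'\}\to\{H,N,Q,\partial_2,\partial_1\}$ I would define $f_2^{\ast}=f_2\colon B_2\to H$ and $f_1^{\ast}(b)=(f_1(b),\partial_1'(b))\colon B_1\to\phi^{\ast}(N)$, the latter being well defined because $\partial_1 f_1=\phi\partial_1'$. One then checks that $(f_2^{\ast},f_1^{\ast},\mathrm{id}_P)$ is a $2$-crossed module morphism (the square, the equivariance, and the Peiffer conditions all reduce to the corresponding facts for $(f_2,f_1,\phi)$ together with the normal-complex identity $\partial_1'\partial_2'=1$) and that composing with $(\phi''=\mathrm{id}_H,\phi'=\bar\phi,\phi)$ recovers $(f_2,f_1,\phi)$. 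Uniqueness is forced: any lift must have first coordinate $\phi'\circ f_1^{\ast}=f_1$, second coordinate $\partial_1^{\ast}\circ f_1^{\ast}=\partial_1'$, and top component $f_2$, so it agrees with the one constructed --- this is the same argument as for the pullback pre-crossed module.

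I expect the main obstacle to be the bookkeeping in PL1, PL3$(b)$ and PL4$(b)$, where the interaction between the second coordinate $p$ (acted on by conjugation, via $\partial_1^{\ast}$) and the first coordinate $n$ (acted on via $\phi$) must be reconciled using $\partial_1(n)=\phi(p)$. A secondary point to keep straight is that the middle term is only a pre-crossed module: since $N\to Q$ is merely a pre-crossed module, CM2 may fail there and its defect is carried entirely by the Peiffer lifting, so I must avoid appealing to any crossed-module property of $\phi^{\ast}(N)\to P$ beyond CM1.
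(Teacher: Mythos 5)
Your proposal is correct and follows essentially the same route as the paper's proof: the same Peiffer lifting $\left\{ (n,p),(n^{\prime },p^{\prime })\right\} =\left\{ n,n^{\prime }\right\} $, the same mechanism by which the PL axioms collapse onto those of $\left\{ H,N,Q,\partial _{2},\partial _{1}\right\} $ via the defining relation $\partial _{1}(n)=\phi (p)$, and the same universal-property maps $f_{2}^{\ast }=f_{2}$ and $f_{1}^{\ast }(b)=(f_{1}(b),\partial _{1}^{\prime }(b))$. Your added touches --- noting that $\partial _{2}^{-1}(\mathrm{Ker}\,\partial _{1})=H$ since $\partial _{1}\partial _{2}=1$, checking normality of $\partial _{2}^{\ast }(H)$ via CM1, and the explicit uniqueness argument forced by the two projections out of $\phi ^{\ast }(N)$ --- are correct refinements of points the paper leaves implicit.
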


\begin{proof} (i) Since $\partial _{1}^{\ast }\partial _{2}^{\ast }(h)=\partial _{1}^{\ast
}(\partial _{2}(h),1),$
\[
\partial _{2}^{-1}(Ker\partial _{1})\overset{%
\partial _{2}^{\ast }}{\rightarrow }\phi ^{\ast }(N)\overset{\partial
_{1}^{\ast }}{\rightarrow }P \]
is a normal complex of groups.
$\partial _{2}^{\ast }$ is $P$-equivariant with the action
$^{p}(n,p^{\prime })=\left( ^{\phi \left( p\right) }n,pp^{\prime
}p^{-1}\right).$
\begin{equation*}
\begin{array}{rcl}
^{p}\partial _{2}^{\ast }\left( h\right) & = & ^{p}(\partial
_{2}\left(
h\right) ,1) \\
& = & (^{\phi \left( p\right) }\partial _{2}\left( h\right) ,p1p^{-1}) \\
& = & (^{\phi \left( p\right) }\partial _{2}\left( h\right) ,1) \\
& = & (\partial _{2}\left( ^{\phi \left( p\right) }h\right) ,1) \\
& = & (\partial _{2}\left( ^{p}h\right) ,1) \\
& = & \partial _{2}^{\ast }\left( ^{p}h\right)%
\end{array}%
\end{equation*}
It is clear that $\partial _{1}^{\ast }$ is $P$-equivariant. The
Peiffer
lifting%
\begin{equation*}
\left\{ -,-\right\} :\phi ^{\ast }(N)\times \phi ^{\ast
}(N)\rightarrow \partial _{2}^{-1}(Ker\partial _{1})
\end{equation*}%
is given by $\left\{ \left( n,p\right) ,\left( n^{\prime },p^{\prime
}\right) \right\} =\{n,n^{\prime }\}.$

\textbf{PL1:}%
\begin{equation*}
\begin{array}{ll}
& \left( n,p\right) \left( n^{\prime },p^{\prime }\right) \left(
n,p\right) ^{-1}\left( ^{\partial _{1}^{\ast }\left( n,p\right)
}\left( n^{\prime
},p^{\prime }\right) ^{-1}\right) \\
= & \left( n,p\right)\left( n^{\prime },p^{\prime }\right) \left(
n^{-1},p^{-1}\right) ^{p}\left( n^{\prime -1},p^{\prime -1}\right) \\
= & \left( n,p\right)\left( n^{\prime },p^{\prime }\right) \left(
n^{-1},p^{-1}\right) \left( ^{\phi \left( p\right) }n^{\prime
-1},pp^{\prime
-1}p^{-1}\right) \\
= & \left( nn^{\prime }n^{-1},pp^{\prime }p^{-1}\right) \left(
^{\partial
_{1}\left( n\right) }n^{\prime -1},pp^{\prime -1}p^{-1}\right) \\
= & \left( nn^{\prime }n^{-1}\left( ^{\partial _{1}\left( n\right)
}n^{\prime -1}\right) ,pp^{\prime }p^{-1}pp^{\prime -1}p^{-1}\right) \\
= & \left( nn^{\prime }n^{-1}\left( ^{\partial _{1}\left( n\right)
}n^{\prime -1}\right) ,1\right) \\
= & \left( \partial _{2}\left\{ n,n^{\prime }\right\} ,1\right) \\
= & \partial _{2}^{\ast }\left\{ n,n^{\prime }\right\} \\
= & \partial _{2}^{\ast }\left\{ \left( n,p\right) ,\left( n^{\prime
},p^{\prime }\right) \right\} .%
\end{array}%
\end{equation*}

\textbf{PL2:}%
\begin{equation*}
\begin{array}{rll}
\left\{ \partial _{2}^{\ast }h,\partial _{2}^{\ast }h^{\prime
}\right\} & = & \left\{ \left( \partial _{2}h,1\right) ,\left(
\partial _{2}h^{\prime
},1\right) \right\} \\
& = & \left\{ \partial _{2}h,\partial _{2}h^{\prime }\right\} \\
& = & \left[ h,h^{\prime }\right] .%
\end{array}%
\end{equation*}

The rest of axioms of $2$-crossed module is given in appendix.

(ii)
\begin{equation*}
\left( \phi ^{\prime \prime },\phi ^{\prime },\phi \right) :\left\{
\partial _{2}^{-1}(Ker\partial _{1}),\phi ^{\ast }(N),P,\partial
_{2}^{\ast },\partial _{1}^{\ast }\right\} \rightarrow \left\{
H,N,Q,\partial _{2},\partial _{1}\right\}
\end{equation*}%
or diagrammatically,%
\begin{equation*}
\xymatrix { \partial _{2}^{-1}(Ker\partial _{1})
\ar[d]_-{\partial_{2}^*} \ar[r]^-{\phi^{\prime\prime}} & H
\ar[d]^-{\partial_2} &
\\ \ \phi^*(N) \ar[d]_-{\partial_{1}^*} \ar[r]^-{\phi^{\prime}} & N
\ar[d]^-{\partial_1}& \\ \ P \ar[r]_-\phi & Q & }
\end{equation*}%
is a morphism of $2$-crossed modules. (See appendix.)\

Suppose that%
\begin{equation*}
\left( f_{2},f_{1},\phi \right) :\left\{ B_{2},B_{1},P,\partial
_{2}^{\prime },\partial _{1}^{\prime }\right\} \rightarrow \left\{
H,N,Q,\partial _{2},\partial _{1}\right\}
\end{equation*}%
is any $2$-crossed module morphism
\begin{equation*}
\xymatrix { B_2 \ar[r]^{\partial^{'}_{2}} \ar[d]_{f_2} & B_1
\ar[d]_{f_1} \ar[r]^{\partial^{'}_{1}} & P \ar[d]_\phi & \\ \  H
\ar[r]_{\partial_{2}} & N \ar[r]_{\partial_{1}} & Q. & \\ }
\end{equation*}%
Then we will show that there is a unique $2$-crossed module morphism%
\begin{equation*}
\left( f_{2}^{\ast },f_{1}^{\ast },id_{P}\right) :\left\{
B_{2},B_{1},P,\partial _{2}^{\prime },\partial _{1}^{\prime
}\right\} \rightarrow \left\{ \partial _{2}^{-1}(Ker\partial
_{1}),\phi ^{\ast }(N),P,\partial _{2}^{\ast },\partial _{1}^{\ast
}\right\}
\end{equation*}%
\begin{equation*}
\xymatrix { B_2 \ar[r]^-{\partial^{'}_{2}} \ar[d]_-{f_2^*} & B_1
\ar[d]_-{f_1^*} \ar[r]^-{\partial^{'}_{1}} & P \ar@{=}[d]_-{id_P} & \\
\  \partial _{2}^{-1}(Ker\partial _{1}) \ar[r]_-{\partial _{2}^*} &
\phi^*(N) \ar[r]_-{\partial _{1}^*} & P & \\ }
\end{equation*}%
where $f_{2}^{\ast }(b_{2})=$ $f_{2}(b_{2})$ and $f_{1}^{\ast
}(b_{1})=(f_{1}(b_{1}),\partial _{1}^{\prime }(b_{1}))$ which is an
element in $\phi ^{\ast }(N).$ First let us check that $\left(
f_{2}^{\ast
},f_{1}^{\ast },id_{P}\right) $ is a $2$-crossed module morphism. \ For $%
b_{1},b_{1}^{\prime }\in B_{1},b_{2}\in B_{2},p\in P$%
\begin{equation*}
\begin{array}{rcl}
^{id_{P}\left( p\right) }f_{2}^{\ast }\left( b_{2}\right)  & = &
^{p}f_{2}\left( b_{2}\right)  \\
& = & ^{\phi \left( p\right) }f_{2}\left( b_{2}\right)  \\
& = & f_{2}\left( ^{p}b_{2}\right)  \\
& = & f_{2}^{\ast }\left( ^{p}b_{2}\right) .%
\end{array}%
\end{equation*}%
Similarly $^{id_{P}\left( p\right) }f_{1}^{\ast }\left( b_{1}\right) =$ $%
f_{1}^{\ast }\left( ^{p}b_{1}\right) ,$ also above diagram is
commutative and
\begin{equation*}
\begin{array}{cclcc}
\left\{ -,-\right\} \left( f_{1}^{\ast }\times f_{1}^{\ast }\right)
(b_{1},b_{1}^{\prime }) & = & \left\{ -,-\right\} \left( f_{1}^{\ast
}(b_{1}),f_{1}^{\ast }(b_{1}^{\prime })\right)  &  &  \\
& = & \left\{ -,-\right\} (\left( f_{1}(b_{1}),\partial _{1}^{\prime
}(b_{1})\right) ,\left( f_{1}(b_{1}^{\prime }),\partial _{1}^{\prime
}(b_{1}^{\prime }\right) ) &  &  \\
& = & \left\{ f_{1}(b_{1}),f_{1}(b_{1}^{\prime })\right\}  &  &  \\
& = & \left\{ -,-\right\} \left( f_{1}\times f_{1}\right) \left(
b_{1},b_{1}^{\prime }\right)  &  &  \\
& = & f_{2}\left\{ -,-\right\} \left( b_{1},b_{1}^{\prime }\right)  &  &  \\
& = & f_{2}\left\{ b_{1},b_{1}^{\prime }\right\}  &  &  \\
& = & f_{2}^{\ast }\left\{ b_{1},b_{1}^{\prime }\right\}  &  &  \\
& = & f_{2}^{\ast }\left\{ -,-\right\} \left( b_{1},b_{1}^{\prime
}\right) &  &
\end{array}%
\end{equation*}%
for $b_{1},b_{1}^{\prime } \in B_{1}.$ \\
Furthermore; the verification of the following equations are immediate.%
\begin{equation*}
\begin{array}{ccc}
\phi ^{\prime \prime }f_{2}^{\ast }=f_{2} & \text{and} & \phi
^{\prime
}f_{1}^{\ast }=f_{1}.%
\end{array}%
\end{equation*}
\end{proof}

Thus we get a functor
\begin{equation*}
\mathbf{\phi }^{\ast }\mathbf{:}\ \mathsf{X}_{2}\mathsf{Mod}\mathbf{/}%
\mathsf{Q}\mathbf{\ \ \rightarrow }\ \mathsf{X}_{2}\mathsf{Mod}\mathbf{/}%
\mathsf{P}
\end{equation*}%
which gives our pullback $2$-crossed module and its left adjoint functor
\begin{equation*}
\mathbf{\phi }_{\ast }\mathbf{:}\ \mathsf{X_{2}Mod}\mathbf{/}P\mathbf{\ \
\rightarrow }\ \mathsf{X_{2}Mod}\mathbf{/}Q
\end{equation*}
gives an induced $2$-crossed module which will be mentioned in
section \ref{i2xm}.
\subsection{Example of Pullback Two-Crossed Modules}

Given $2$-crossed module $%
\begin{array}{c}
\left\{ \left\{ 1\right\} ,G,Q,1,i\right\}%
\end{array}%
$ where $i$ is an inclusion of a normal subgroup and a morphism $\phi
:P\rightarrow Q$\ of groups, the pullback $2$-crossed module is%
\begin{equation*}
\begin{array}{ccl}
\phi ^{\ast }\left\{ \left\{ 1\right\} ,G,Q,1,i\right\} & = & \left\{
\left\{ 1\right\} ,\phi ^{\ast }(G),P,\partial _{2}^{\ast },\partial
_{1}^{\ast }\right\} \\
& = & \left\{ \left\{ 1\right\} ,\phi ^{^{-1}}(G),P,\partial _{2}^{\ast
},\partial _{1}^{\ast }\right\}%
\end{array}%
\end{equation*}%
as%
\begin{equation*}
\begin{array}{ccc}
\phi ^{\ast }(G) & = & \left\{ (g,p)\mid \phi (p)=i(g),g\in G,p\in P\right\}
\\
& \cong & \left\{ p\in P\mid \phi (p)=g\right\} =\phi ^{-1}(G)\unlhd P.%
\end{array}%
\end{equation*}%
The pullback diagram is
\begin{equation*}
\xymatrix { \{1\} \ar[d]_-{\partial_{2}^{*}=1} \ar@{=}[r] & \{1\} \ar[d] & \\
\ \phi^{-1}(G) \ar[d]_-{\partial_{1}^{*}} \ar[r] & G \ar[d]^-i & \\ \ P
\ar[r]_-\phi & Q. & }
\end{equation*}

Particularly if $G=\{1\}$, then%
\begin{equation*}
\phi ^{\ast }(\{1\})\cong \left\{ p\in P\mid \phi (p)=1\right\} =\ker \phi
\cong \{1\}
\end{equation*}%
and so $%
\begin{array}{c}
\{\{1\},\{1\},P,\partial _{2}^{\ast },\partial _{1}^{\ast }\}%
\end{array}%
$ is a pullback $2$-crossed module. \

Also if $\phi $ is an isomorphism and $G=Q,$ then $\phi ^{\ast }(Q)=Q\times
P.$

Similarly when we consider examples given in Section $1$, the following
diagrams are pullbacks.%
\begin{equation*}
\xymatrix { \{1\} \ar[d]_{\partial_{2}^{*}} \ar@{=}[r] & \{1\} \ar[d] & \\ \
\phi^{*}(Q) \ar[d]_{\partial_{1}^{*}} \ar[r] & Q \ar[d] & \\ \ Aut (P)
\ar[r]_{Aut \phi} & Aut (Q) & }\xymatrix { \{1\} \ar[d]_{\partial_{2}^{*}}
\ar@{=}[r] & \{1\} \ar[d] & \\ \ N \times Ker\phi \ar[d]_{\partial_{1}^{*}}
\ar[r] & N \ar[d]^0 & \\ \ P \ar[r]_\phi & Q& }
\end{equation*}

\section{Induced Two-Crossed Modules}\label{i2xm}

In this section we will construct induced $2$-crossed modules by extending the
discussion about induced crossed modules in subsection \ref{go}.

\begin{definition}
For any $2$-crossed module $L\overset{\partial}{\rightarrow }M\overset{%
\alpha}{\rightarrow }P$ and any pre-crossed module morphism $(M\overset{\alpha }{\rightarrow }P)\overset{(\phi^{\prime},\phi) }{\longrightarrow } (N\overset{\beta }{%
\rightarrow }Q),$
the $2$-crossed module induced by $\theta=(\phi^{\prime},\phi)$ from $\left\{ L,M,P,\partial,\alpha \right\}$
 is given by\\

(i) a $2$-crossed module $\left\{ \theta _{\ast }\left( L\right),N ,Q,\partial_{*},\beta \right\}
$

(ii) given any morphism of $2$-crossed modules%
\begin{equation*}
(f,\phi^{\prime},\phi ):\left\{ L,M,P,\partial,\alpha\right\}
\longrightarrow \{B,N,Q,\partial^{\prime },\beta \}
\end{equation*}%
then there is a unique morphism of $2$-crossed modules $(f_{\ast
},id_N,id_{Q})$ that makes the following diagram commute:
\begin{equation*}
\xymatrix { (L, M, P, \partial, \alpha)
\ar[drr]^{(\phi^{''},\phi^{'},\phi)} \ar[d]_{(f,\phi^{'},\phi)} & &
&
\\ \ (B, N, Q, \partial^{'}, \beta) & & (\theta_*(L),
N, Q,
\partial_{*}, \beta) \ar@{-->}[ll]^-{(f_{\ast},id_N,id_Q)}& }
\end{equation*}%
or more simply as%
\[
\xymatrix @R=20pt@C=25pt { & & & B  \ar[dd]^{\partial^\prime} & \\
\ L  \ar[dd]_{\partial} \ar[rr]_{\phi^{\prime \prime}} \ar@/^/[urrr]^{f} &  & {\theta_*}(L) \ar@{-->}[ur]_{f_\ast} \ar[dd]^{\partial_\ast} &  & \\
\ & & & ( N \rightarrow  Q) & \\
\ (M \rightarrow P)  \ar[rr]_\theta \ar@/^/[urrr]^{(\phi^{'},\phi)}
& & (N \rightarrow Q). \ar@{-->}[ur]_{(id_N,id_Q)}  & & }
\]

\end{definition}
The $2$-crossed module  $\left\{ \phi _{\ast }\left( L\right)
,N ,Q,\partial_{*},\beta\right\}$ is called the induced $2$-crossed module of $\left\{ L,M,P,\partial,\alpha \right\}$
along $\theta=(\phi^{\prime},\phi)$.

\subsection{Construction of Induced $2$-Crossed Modules} \label{go1}

The idea of the construction of induced $2$-crossed modules is the same as that of induced crossed module; we put all the data
in a free group and divide this group by all relations which we need to have
the properties we want.

Given a morphism
\begin{equation*}
\theta =(\phi ^{\prime },\phi ):(M\overset{\alpha }{\rightarrow }P)\longrightarrow (N\overset{\beta }{\rightarrow }Q)
\end{equation*}

\noindent of pre-crossed modules, we can define the $2$-crossed module induced by $%
\theta $, then we put things together. Thus we have the commutative diagram%
\begin{equation*}
\xymatrix{ L \ar[r] \ar[d]_{\partial} \ar[dr]^{\phi ^{\prime }{\partial}} & ^{Q}L \ar[d] & \\
\ {(M \rightarrow P)} \ar[r]_{(\phi ^{\prime },\phi )} & {(N \rightarrow Q)} & }
\end{equation*}%
where the complex
$
L\overset{\partial}{\rightarrow }M\overset{\alpha
}{\rightarrow }P
$
is a $2$-crossed module, $^{Q}L$  is a free group generated by L and the first morphism of the complex
\begin{equation*}
^{Q}L\rightarrow N\rightarrow Q
\end{equation*}
of $Q$-groups is induced
by $\phi ^{\prime }\partial.$

We consider the free product
\begin{equation*}
^{Q}L\ast \langle N\times N\rangle
\end{equation*}%
where $\langle N\times N\rangle $ is the free group generated by the set $%
N\times N.$ One can see that the action of $Q$ on $N$ induces an action on $%
\langle N\times N\rangle $ by
\begin{equation*}
^{q}(n_{1},n_{2})=(^{q}n_{1},^{q}n_{2})
\end{equation*}
for $n_{1},n_{2} \in N$ and $q\in Q$. Thus we get a morphism
\begin{equation*}
\bar{\partial}: \ ^{Q}L\ast \langle N\times N\rangle \rightarrow N
\end{equation*}%
given by
\begin{equation*}
\bar{\partial}((q,l)(n_{1},n_{2}))=\text{ }^{q}\phi \partial (l)\langle
n_{1},n_{2}\rangle
\end{equation*}%
where $\langle n_{1},n_{2}\rangle =n_{1}n_{2}n_{1}^{-1}\left(\text{
}^{\beta (n_{1})} n_{2}^{-1}\right)$
for $n_{1},n_{2}\in N.$ It is clear that the Peiffer lifting is given by $%
\left\{ n_{1},n_{2}\right\} =(n_{1},n_{2})$ for $n_{1},n_{2}\in N.$

Thus we get the free $(N\rightarrow Q)$-$2$-crossed module generated by $L$
denoted $\{{\theta_{\ast }(L),N,Q,\partial_*,\beta}\}$ by dividing the group $^{Q}L\ast \langle N\times N\rangle $ by $S$ generated by all elements of the following relations
\begin{equation*}
\left.
\begin{array}{c}
\left\{ \bar{\partial}((q,l)(n_{1},n_{2})),\bar{\partial}((q^{\prime },l^{\prime })(n_{1}^{\prime },n_{2}^{\prime
}))\right\} =\left[
(q,l)(n_{1},n_{2}),(q^{\prime },l^{\prime })(n_{1}^{\prime },n_{2}^{\prime })%
\right]  \\
\left\{ n_{0},n_{1}n_{2}\right\} =\ ^{n_{0}n_{1}n_{0}^{-1}}\left\{
n_{0},n_{2}\right\} \left\{ n_{0},n_{1}\right\}  \\
\left\{ n_{0}n_{1},n_{2}\right\} =\left\{
n_{0},n_{1}n_{2}^{-1}n_{1}\right\}
\left\{ n_{0},n_{1}\right\}  \\
\left\{ \bar{\partial}((q,l)(n_{1},n_{2})),n\right\} =(q,l)(n_{1},n_{2})%
\text{ }^{n}\left( (q,l)(n_{1},n_{2})\right) ^{-1} \\
\left\{ n,\bar{\partial}((q,l)(n_{1},n_{2}))\right\} =\ ^{n}\left(
(q,l)(n_{1},n_{2})\right) \text{ }^{\beta (n)}\left(
(q,l)(n_{1},n_{2})\right) ^{-1}%
\end{array}%
\right\}\ \ \ \ (\ref{go1}.1)
\end{equation*}%
for $ l,l^{\prime } \in L,q,q^{\prime } \in Q$ and
$n,n_{0},n_{1},n_{2},n_{1}^{\prime},n_{2}^{\prime} \in N.$
\\

We note that relations PL$1$ and PL$5$ are
given as follows:

\begin{equation*}
\partial _{\ast }\left\{ n_{1,}n_{2}\right\} =\partial _{\ast
}(n_{1,}n_{2})=\langle {n_{1},n_{2}\rangle }
\end{equation*}%
and%
\begin{equation*}
^{q}\left\{ n_{1,}n_{2}\right\} =\text{ }^{q}(n_{1,}n_{2})=(\text{ }%
^{q}n_{1,}\text{ }^{q}n_{2})=\left\{ ^{q}n_{1,}\text{ }^{q}n_{2}\right\} .
\end{equation*}

To get the $2$-crossed module induced by $\theta $, we add the relations
\begin{equation*}
\left.
\begin{array}{c}
\left\{ m_{1},m_{2}\right\} =\left\{ \phi ^{\prime }(m_{1}),\phi ^{\prime
}(m_{2})\right\}  \\
(\phi (p),l)=(1,^{p}l)%
\end{array}
\right\} \ \ \ \ \ \ \ (\ref{go1}.2)
\end{equation*}%
for $m_{1},m_{2} \in M,l \in L$ and $p \in P.$

Thus we have the following commutative diagram%
\[
\xymatrix{ L \ar[r]^-{\phi ^{\prime \prime}} \ar[d]_{\partial}  & ({^{Q}L\ast \langle N\times N\rangle})/S \ar[d]^{\partial_{\ast }} & \\
\ M \ar[r]^{\phi ^{\prime}} \ar[d]_{\alpha} & N \ar[d]^{\beta} & \\
\ P \ar[r]^{\phi} & Q & }
\]
where $S$ is the normal subgroup generated by the relation (\ref{go1}.1) and ${\phi ^{\prime \prime}}$ is defined as ${\phi ^{\prime \prime}}(l)=(1,l).$
Also, since
we have relation ($\ref{go1}.2),\ \theta=(\phi ^{\prime },\phi )$ is a morphism of $2$-crossed modules  and
\begin{equation*}
\beta \partial _{\ast }((q,l)\left( n_{1,}n_{2}\right) )=\beta (^{q}({\phi
^{\prime }\partial (l))}\langle {n_{1},n_{2}\rangle })=\text{ }^{q}({\phi
\alpha \partial (l))\beta ({n}_{1}n_{2}n_{1}^{-1\beta (n_{1})}n_{2}^{-1})=1}
\end{equation*}
so $\{{\theta_{\ast }(L),N,Q,\partial_*,\beta}\}$ is a complex of groups.
Thus $\{{\theta_{\ast }(L),N,Q,\partial_*,\beta}\}$ is an induced $(N \rightarrow
Q)$-$2$-crossed modules.

Then we obtain analogous result in dimension $3$ which extends
proposition \ref{go2} as follows:

\begin{theorem}
\label{ug4} Let $L\overset{\partial }{\rightarrow }M\overset{\alpha }{%
\rightarrow }P$ be a $2$-crossed module and let
\begin{equation*}
\theta =(\phi ^{\prime },\phi ):(M\overset{\alpha }{\rightarrow }%
P)\longrightarrow (N\overset{\beta }{\rightarrow }Q)
\end{equation*}%
be a pre-crossed module morphism and let $\theta _{\ast }(L)$ be the
quotient of the free product\ ${^{Q}L}$ $\ast $ ${\langle N\times
N\rangle }$ by $S$ \ where
${^{Q}L}$ is generated by the set $Q\times L$ with defining relations%
\begin{equation*}
\begin{array}{c}
(q,l_{1})(q,l_{2})=(q,l_{1}l_{2}) \\
(\phi (p),l)=(1,^{p}l)%
\end{array}%
\end{equation*}%
and the free group ${\langle N\times N\rangle }$ is generated by the
set $N\times N$ and $S$ is the normal subgroup generated by the
relations PL$2$, PL$3$ and PL$4$ in the definition of a $2$-crossed
module. Then
\begin{equation*}
\theta _{\ast }(L)\overset{\partial _{\ast }}{\rightarrow }N\overset{%
\alpha }{\rightarrow }Q
\end{equation*}%
is an induced $(N\rightarrow Q)$-$2$-crossed module where $\partial _{\ast }$
is given by
\begin{equation*}
\partial _{\ast }\ ((q,l)\left( n_{1,}n_{2}\right) )\ =\ (^{q}{\phi ^{\prime
}\partial (l)})\langle {n_{1},n_{2}\rangle }
\end{equation*}%
together with the Peiffer lifting $\left\{ n_{1,}n_{2}\right\}
=(n_{1,}n_{2}).$
\end{theorem}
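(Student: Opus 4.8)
The plan is to verify that the quotient $\theta_\ast(L) = ({}^{Q}L \ast \langle N \times N\rangle)/S$, equipped with the stated boundary $\partial_\ast$ and Peiffer lifting, satisfies the five axioms \textbf{PL1}--\textbf{PL5} of a $2$-crossed module and then that it enjoys the required universal property. Since the construction bakes the Peiffer relations directly into $S$, the strategy mirrors the construction of a free crossed module: one first checks that $\partial_\ast$ and the lifting are \emph{well defined} on the quotient, and only afterwards that the axioms hold (several of them, namely PL2, PL3, PL4, hold essentially by fiat because they were imposed as the defining relations of $S$). First I would confirm that $\theta_\ast(L) \overset{\partial_\ast}{\to} N \overset{\beta}{\to} Q$ is a normal complex of groups: the excerpt already computes $\beta\partial_\ast((q,l)(n_1,n_2)) = 1$, so $\partial_\ast\theta_\ast(L) \subseteq \ker\beta \unlhd N$, and normality of the image follows from the $N$-equivariance of $\partial_\ast$ together with PL1.

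Next I would establish that the maps are well defined and equivariant. The group $Q$ acts on ${}^{Q}L$ by ${}^{q'}(q,l) = (q'q,l)$ and on $\langle N\times N\rangle$ by ${}^{q}(n_1,n_2) = ({}^{q}n_1,{}^{q}n_2)$; I must check that this $Q$-action descends to the quotient by $S$, i.e.\ that $S$ is $Q$-stable, and similarly that the relations $(\ref{go1}.2)$ identifying the $P$-action on $L$ with the $\phi(P)$-action are compatible. The formula for $\partial_\ast$ must be shown to respect all relations defining ${}^{Q}L$ and $S$; the computation
\begin{equation*}
\partial_\ast\{n_1,n_2\} = \partial_\ast(n_1,n_2) = \langle n_1,n_2\rangle = n_1 n_2 n_1^{-1}({}^{\beta(n_1)}n_2^{-1})
\end{equation*}
is precisely \textbf{PL1} for the images coming from $\langle N\times N\rangle$, and this must be extended to arbitrary products $(q,l)(n_1,n_2)$ using $P$-equivariance of the original $\partial$ and the fact that $\theta$ is a pre-crossed module morphism. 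Axiom \textbf{PL5} is the displayed identity ${}^{q}\{n_1,n_2\} = \{{}^{q}n_1,{}^{q}n_2\}$, immediate from the diagonal $Q$-action; axioms \textbf{PL2}, \textbf{PL3} and the two halves of \textbf{PL4} hold because the corresponding words were placed in $S$, so here the content is only to check that these relations are consistent, i.e.\ that imposing them does not collapse $\theta_\ast(L)$ beyond what is intended.

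For the universal property, given any morphism $(f,\phi',\phi)\colon \{L,M,P,\partial,\alpha\} \to \{B,N,Q,\partial',\beta\}$ of $2$-crossed modules, I would define $f_\ast\colon \theta_\ast(L) \to B$ on generators by $f_\ast(q,l) = {}^{q}f(l)$ and $f_\ast(n_1,n_2) = \{n_1,n_2\}_B$ (the Peiffer lifting of the target $B$), then extend multiplicatively, and verify that $f_\ast$ kills every relation in $S$ and in the presentation of ${}^{Q}L$. That $f_\ast$ respects the relations of $S$ is forced precisely because $\{L,M,P\} \to \{B,N,Q\}$ is a morphism of $2$-crossed modules, so $B$'s Peiffer lifting already satisfies PL2--PL4; uniqueness follows since the values on generators are determined by the commutativity requirement $\phi''f_\ast = f$ together with the fixed behaviour on the $N\times N$ part.

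The main obstacle I expect is verifying \textbf{PL3} (the two bilinearity-type identities for $\{-,-\}$) and the well-definedness of $\partial_\ast$ on mixed words $(q,l)(n_1,n_2)$ simultaneously: one must be careful that the action conventions (left actions throughout, the diagonal $Q$-action, and the identification $(\phi(p),l) = (1,{}^{p}l)$) are mutually consistent, since an inconsistency there would make $\partial_\ast$ ill-defined or break the equivariance needed for PL1. A secondary subtlety is checking that the induced $M$-action on $L$ defined by ${}^{m}l = l\{\partial_2 l^{-1},m\}$ in the source is compatible, under $\phi'$, with the corresponding structure in $\theta_\ast(L)$, so that the bottom square $\phi'\alpha = \beta\partial_\ast$ of the defining diagram genuinely commutes as a diagram of $2$-crossed modules rather than merely of complexes. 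Once these compatibility checks are in place, the remaining verifications are the routine group-theoretic manipulations that the relations in $(\ref{go1}.1)$ were designed to make automatic.
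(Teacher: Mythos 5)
Your proposal is correct and takes essentially the same approach as the paper: the axioms PL2--PL4 hold by fiat from the relations defining $S$, while PL1, PL5 and the complex condition $\beta\partial_\ast=1$ are checked directly, exactly as in the construction preceding the theorem, so that the proof proper reduces to the universal property. Your morphism, determined on generators by $f_\ast(q,l)={}^{q}f(l)$ and $f_\ast(n_1,n_2)=\{n_1,n_2\}_B$, is precisely the paper's $f_\ast((q,l)(n_1,n_2))={}^{q}f(l)\{n_1,n_2\}$, with the same justification that the relations in $S$ are killed (because $B$ satisfies the $2$-crossed module axioms and $(f,\phi',\phi)$ preserves the Peiffer lifting) and the same argument for uniqueness via $f_\ast\phi''=f$, equivariance, and preservation of the lifting.
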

\begin{proof}
We have only to check the universal property. For any morphism of $2$%
-crossed modules
\begin{equation*}
(f,\phi ^{\prime },\phi):\left\{ L,M,P,\partial ,\alpha \right\}
\rightarrow \left\{ B,N,Q,\partial ^{\prime },\beta \right\}
\end{equation*}%
there is a unique morphism of $(N\rightarrow Q)$-$2$-crossed modules
\[
(f_{\ast },id_{N},id_{Q}):\left\{ \theta _{\ast }(L),N,Q,\partial _{\ast
},\beta \right\} \rightarrow \left\{ B,N,Q,\partial ^{\prime },\beta
\right\}
\]
given by $f_{\ast }((q,l)\left( n_{1,}n_{2}\right) )=\ %
^{q}f(l)\left\{ n_{1,}n_{2}\right\} :$

\begin{equation*}
\begin{array}{rcl}
\partial ^{\prime }f_{\ast }((q,l)(n_{1},n_{2})) & = & \partial ^{\prime
}(^{q}f(l)\left\{ n_{1},n_{2}\right\} ) \\
& = & ^{q}\left( \partial ^{\prime }f(l)\right) \partial ^{\prime
}\left\{
n_{1},n_{2}\right\}  \\
& = & ^{q}\left( \phi ^{\prime }\partial (l)\right) n_{1}n_{2}n_{1}^{-1}%
\text{ }^{\beta (n_{1})}n_{2}^{-1} \\
& = & ^{q}\left( \phi ^{\prime }\partial (l)\right) \langle
n_{1},n_{2}\rangle  \\
& = & \partial _{\ast }((q,l)(n_{1},n_{2})).%
\end{array}%
\end{equation*}
Furthermore the verification of the following equations are
immediate:
\begin{equation*}
f_{\ast }\left( ^{q^{\prime }}\left( (q,l)(n_{1},n_{2})\right) \right) =%
\text{ }^{q}f_{\ast }\left( (q,l)(n_{1},n_{2})\right) \qquad
\text{and\qquad }f_{\ast }\left\{ n_{1},n_{2}\right\} =\left\{
n_{1},n_{2}\right\}
\end{equation*}%
for $l \in L, n_{1},n_{2} \in N$ and $q,q^\prime \in Q$ and $f_{\ast
}\phi ^{\prime \prime }=f$ is satisfied as required.
\end{proof}

In the case when $(\phi ^{\prime },\phi ):(M,P,\alpha )\rightarrow (N,Q,\beta
)$ is an epimorphism or a monomorphism of pre-crossed modules, we get proposition \ref{ug1} and proposition \ref{ug2} in dimension $3$ in terms of proposition \ref{gs1} and proposition \ref{gs2} given by Brown for crossed modules.

\begin{proposition}
\label{ug1}If $L\overset{\partial }{\rightarrow }M\overset{\alpha }{\rightarrow }P$ is a $%
2$-crossed module and $\theta =(\phi ^{\prime },\phi ):(M,P,\alpha
)\rightarrow
(N,Q,\beta )$ is an epimorphism of  pre-crossed modules Ker$\phi =K$ and Ker$%
\phi ^{\prime }=T$, then
\begin{equation*}
\theta _{\ast }(L)\cong L/[K,L]
\end{equation*}%
where $[K,L]$ denotes the subgroup of $L$ generated by $\left\{
^{k}ll^{-1}\mid k\in K,l\in L\right\} $ for all $k\in K,l\in L.$
\end{proposition}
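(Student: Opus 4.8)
The plan is to exploit that $\theta$ being an epimorphism means both $\phi$ and $\phi'$ are surjective, so that the whole induced object collapses onto a quotient of $L$. First I would show that the canonical homomorphism $\pi=\phi'':L\to\theta_{\ast}(L)$, $l\mapsto(1,l)$, is surjective. Indeed, by the defining relation $(\phi(p),l)=(1,{}^{p}l)$ of ${}^{Q}L$ together with surjectivity of $\phi$, every generator $(q,l)$ equals $(1,{}^{p}l)$ for any $p$ with $\phi(p)=q$; and by surjectivity of $\phi'$ every generator $(n_1,n_2)=\{n_1,n_2\}=\{\phi'(m_1),\phi'(m_2)\}$ equals $\phi''\{m_1,m_2\}=(1,\{m_1,m_2\})$ by the morphism relation, hence lies in the image of $\pi$. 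Thus $\theta_{\ast}(L)$ is generated by the elements $(1,l)$ and is a quotient of $L$.

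Next I would locate $[K,L]$ inside $\ker\pi$. For $k\in K$ we have $\phi(k)=1$, so the relation $(\phi(k),l)=(1,{}^{k}l)$ becomes $(1,l)=(1,{}^{k}l)$, whence $\pi({}^{k}l\,l^{-1})=(1,{}^{k}l)(1,l)^{-1}=1$. Therefore $[K,L]\subseteq\ker\pi$ and $\pi$ factors through a surjection $\bar\pi:L/[K,L]\to\theta_{\ast}(L)$.

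To produce an inverse I would equip $L/[K,L]$ with the structure of an $(N\to Q)$-$2$-crossed module. The subgroup $[K,L]$ is $P$-invariant, since $K\trianglelefteq P$ gives ${}^{p}({}^{k}l\,l^{-1})={}^{pkp^{-1}}({}^{p}l)({}^{p}l)^{-1}$, again a generator; hence the $P$-action descends to a $Q$-action through $Q\cong P/K$. The boundary $\bar\partial:L/[K,L]\to N$ is induced by $\phi'\partial$, which is well defined because $\phi'\partial({}^{k}l\,l^{-1})={}^{\phi(k)}\phi'\partial(l)\,\phi'\partial(l)^{-1}=1$, and the Peiffer lifting is set to $\{n_1,n_2\}=\overline{\{m_1,m_2\}}$ for any preimages $m_i$ of $n_i$ under $\phi'$. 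After verifying PL1--PL5 and that $(\nu,\phi',\phi)$ with $\nu:l\mapsto\overline{l}$ is a morphism of $2$-crossed modules, the universal property established in Theorem \ref{ug4} supplies a unique morphism $g:\theta_{\ast}(L)\to L/[K,L]$ with $g\phi''=\nu$. Since $g$ and $\bar\pi$ both restrict to the identity on the generators $(1,l)$, they are mutually inverse, giving $\theta_{\ast}(L)\cong L/[K,L]$.

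The hard part will be the well-definedness of this Peiffer lifting on $L/[K,L]$: I must show that replacing a preimage $m_i$ by $m_i t$ with $t\in T=\ker\phi'$ alters $\{m_1,m_2\}$ only within $[K,L]$. Expanding $\{m_1,m_2 t\}$ and $\{m_1 t,m_2\}$ via the bilinearity relations PL3 reduces this to checking that the correcting factors ${}^{m_1 m_2 m_1^{-1}}\{m_1,t\}$ and ${}^{\alpha(m_1)}\{t,m_2\}$ land in $[K,L]$; here the identity $\phi\alpha=\beta\phi'$ forces $\alpha(T)\subseteq K$, and one combines PL4 with the $P$-invariance of $[K,L]$ to conclude. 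This interaction between $T$ and $[K,L]$ is the genuinely new ingredient compared with the crossed-module computation of Proposition \ref{gs1}.
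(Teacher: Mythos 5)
Your first two steps are correct and are consistent with the construction: relations $(\ref{go1}.2)$ together with surjectivity of $\phi$ and $\phi '$ do show that every generator of $\theta _{\ast }(L)$ is identified with some $(1,l)$, so $\phi ''$ is onto, and $\phi (k)=1$ gives $[K,L]\subseteq \ker \phi ''$. Your third step is, in substance, the paper's own argument: the paper likewise endows $L/[K,L]$ with the Peiffer lifting defined by choosing $\phi '$-preimages, $\{mT,m'T\}:=\{m,m'\}[K,L]$ (writing $N\cong M/T$, $Q\cong P/K$), and then invokes universality. The genuine gap sits exactly at the point you call the hard part, and your sketch for closing it fails: PL4 computes only $\{\partial (b),m\}$ and $\{m,\partial (b)\}$, that is, liftings in which one entry is a boundary, whereas an element $t\in T=\ker \phi '$ need not lie in $\partial (L)$ (nothing in the hypotheses gives $T\subseteq \mathrm{im}\,\partial$); the facts $\alpha (T)\subseteq K$ and that $[K,L]$ is $P$-invariant put no constraint whatever on the elements $\{m_{1},t\}$ and $\{t,m_{2}\}$ themselves.

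In fact your own surjectivity argument shows why this step cannot be repaired as stated. By $(\ref{go1}.2)$ and the identity $\{n,1\}=1$ (PL3 with $m_{1}=m_{2}=1$), one has $\phi ''\{m,t\}=\{\phi '(m),\phi '(t)\}=\{\phi '(m),1\}=1$ for all $m\in M$, $t\in T$, so every class $\{m,t\}[K,L]$ lies in $\ker \bar{\pi}$; but such classes need not vanish in $L/[K,L]$. Concretely, take $P=Q=\{1\}$, $N=\{1\}$, $M$ free of rank $2$, and $L=\langle M,M\rangle =[M,M]$ with $\partial$ the inclusion and $\{m,m'\}=[m,m']$ (the paper's Example 1 of a $2$-crossed module); then $\theta$ is an epimorphism of pre-crossed modules with $K=\{1\}$, hence $[K,L]=\{1\}$, and $T=M$, yet $\{m,t\}=[m,t]\neq 1$. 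So $\bar{\pi}$ is not injective, no inverse $g$ can exist, and indeed no Peiffer lifting on $L/[K,L]$ over $(N\rightarrow Q)$ is possible at all: when $N$ is trivial, PL2 together with $\{1,1\}=1$ forces the top group of any $2$-crossed module to be abelian, while $[M,M]$ is nonabelian free. You should be aware that the paper's own proof has the identical defect --- it never verifies that $\{m,m'\}[K,L]$ depends only on the cosets $mT$, $m'T$ --- so you have correctly isolated the step the paper glosses over; but your proposed fix does not close it, and in this generality it cannot be closed, only a modified statement (enlarging $[K,L]$ by the elements $\{m,t\}$, $\{t,m\}$ with $t\in T$, or adding hypotheses controlling $\ker \phi '$) can be proved.
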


\begin{proof}
As $\theta =(\phi ^{\prime },\phi ):(M,P,\alpha )\rightarrow
(N,Q,\beta )$ is
an epimorphism of pre-crossed modules, $Q\cong P/K$ and  $N\cong M/T.$ Since $K$ acts on trivially on $L/[K,L],$ $Q\cong
P/K$ acts on $L/[K,L]$ by  $^{q}(l[K,L])=$ $^{pK}(l[K,L])=$ $\left(
^{p}l\right)
[K,L].$%
\begin{equation*}
L/[K,L]\overset{\partial _{\ast }}{\rightarrow }N\overset{\beta }{%
\rightarrow }Q
\end{equation*}%
is a $2$-crossed module where $\partial _{\ast }(l[K,L])=\partial
(l)T,\ \beta (mT)=\alpha (m)K.$ As
\begin{equation*}
\beta \partial _{\ast }(l[K,L])=\beta (\partial (l)T)=\alpha
(\partial (l))K=K,
\end{equation*}%
$L/[K,L]\overset{\partial _{\ast }}{\rightarrow }N\overset{\beta }{%
\rightarrow }Q$ is a complex of groups.

The Peiffer lifting%
\begin{equation*}
N\times N\rightarrow L/[K,L]
\end{equation*}%
is given by $\left\{ mT,m^{\prime }T\right\} =\left\{ m,m^{\prime
}\right\} [K,L].$

\textbf{PL1:}%
\begin{equation*}
\begin{array}{rcl}
\partial _{\ast }\left\{ mT,m^{\prime }T\right\}  & = & \partial _{\ast
}(\left\{ m,m^{\prime }\right\} [K,L]) \\
& = & (\partial \left\{ m,m^{\prime }\right\} )T \\
& = & (mm^{\prime }m^{-1}\text{ }^{\alpha (m)}m^{\prime -1})T \\
& = & mTm^{\prime }T(m^{-1}T)\text{ }(^{\alpha (m)}m^{\prime -1})T \\
& = & mTm^{\prime }T(mT)^{-1}\text{ }^{\alpha (m)K}(m^{\prime }T)^{-1} \\
& = & mTm^{\prime }T(mT)^{-1}\text{ }^{\beta (mT)}(m^{\prime }T)^{-1}%
\end{array}%
\end{equation*}

\textbf{PL2:}%
\begin{equation*}
\begin{array}{rcl}
\left\{ \partial _{\ast }(l[K,L]),\partial _{\ast }(l^{\prime
}[K,L])\right\}  & = & \left\{ \partial (l)T,\partial (l^{\prime
})T\right\}
\\
& = & \left\{ \partial (l),\partial (l^{\prime })\right\} [K,L] \\
& = & [l,l^{\prime }][K,L] \\
& = & \left[ l[K,L],l^{\prime }[K,L]\right]
\end{array}%
\end{equation*}

\noindent
for $m,m^\prime \in M$ and $l,l^\prime \in L$. The rest of
axioms of the $2$-crossed module is given in appendix.
\begin{equation*}
(\phi ^{\prime \prime },\phi ^{\prime },\phi ):\left\{
L,M,P,\partial ,\alpha \right\} \rightarrow \left\{
L/[K,L],N,Q,\partial _{\ast },\beta \right\}
\end{equation*}%
or diagrammatically,%
\begin{equation*}
\xymatrix { L \ar[r]^{\phi^{''}} \ar[d]_\partial & L/[K,L]
\ar[d]^{\partial _\ast} & \\ \ M \ar[r]^-{\phi^\prime}
\ar[d]_{\alpha}& N \ar[d]^{\beta} & \\ \ P \ar[r]^\phi & Q & }
\end{equation*}%
is a morphism of $2$-crossed modules. (See appendix.)

Suppose that%
\begin{equation*}
(f,\phi ^{\prime },\phi ):\left\{ L,M,P,\partial ,\alpha \right\}
\rightarrow \left\{ B,N,Q,\partial ^{\prime },\beta \right\}
\end{equation*}%
is any $2$-crossed module morphism. Then we will show that there is
a unique $2$-crossed module morphism
\begin{equation*}
(f_{\ast },id_{N},id_{Q}):\left\{ L/[K,L],N,Q,\partial _{\ast
},\beta \right\} \rightarrow \left\{ B,N,Q,\partial ^{\prime },\beta
\right\}
\end{equation*}%
\begin{equation*}
\xymatrix { L/[K,L] \ar[r]^-{\partial^{\prime}_{2}} \ar[d]_-{f_{\ast }} &
N \ar[d]_-{id_{N}} \ar[r]^-{\beta} & Q \ar[d]_-{id_{Q}} & \\ \ B
\ar[r]_-{\partial ^{\prime }} & N \ar[r]_-{\beta} & Q & \\ }
\end{equation*}%
where $f_{\ast }(l[K,L])=f(l)$ such that $(f_{\ast},id_N,id_Q)(\phi^{\prime\prime},\phi^{\prime},\phi)=(f,\phi^{\prime},\phi)$. Since
\begin{equation*}
f(^{k}ll^{-1})=f(^{k}l)f(l^{-1})=\text{ }^{\phi
(k)}f(l)f(l)^{-1}=1_{B},
\end{equation*}%
$f_{\ast }$ is well-defined.

First let us check that $(f_{\ast },id_{N},id_{Q})$ is a $2$-crossed
module
morphism, for $l[K,L]\in L/[K,L]$ and $q\in Q,$%
\begin{equation*}
\begin{array}{rcl}
f_{\ast }(^{q}(l[K,L])) & = & f_{\ast }(^{pK}(l[K,L])) \\
& = & f_{\ast }(^{p}l[K,L]) \\
& = & f(^{p}l) \\
& = & ^{\phi (p)}f(l) \\
& = & ^{pK}f_{\ast }(l[K,L]) \\
& = & ^{q}f_{\ast }(l[K,L]),%
\end{array}%
\end{equation*}%
$\partial ^{\prime }f_{\ast }=\partial _{\ast }$ and
\begin{equation*}
\begin{array}{rcl}
f_{\ast }\left\{ mT,m^{\prime }T\right\}  & = & f_{\ast }(\left\{
m,m^{\prime }\right\} [K,L]) \\
& = & f\left\{ m,m^{\prime }\right\}  \\
& = & \left\{ mT,m^{\prime }T\right\} .%
\end{array}%
\end{equation*}%
Thus $(f_{\ast },id_{N},id_{Q})$ is a morphism of $2$-crossed modules.
Furthermore the following equation is verified%
\begin{equation*}
f_{\ast }\phi ^{\prime \prime }=f.
\end{equation*}%
So given any morphism of $2$-crossed modules
\begin{equation*}
(f,\phi ^{\prime },\phi ):\left\{ L,M,P,\partial ,\alpha \right\}
\rightarrow \left\{ B,N,Q,\partial ^{\prime },\beta \right\} ,
\end{equation*}%
then there is a unique $(f_{\ast },id_{N},id_{Q})$ $2$-crossed
module
morphism that makes the following diagram commutes:%
\begin{equation*}
\xymatrix { \{L, M, P, \partial, \alpha\}
\ar[drr]^{(\phi^{''},\phi^{'},\phi)} \ar[d]_{(f,\phi^\prime,\phi)} &
& &
\\ \ \{B,N,Q, \partial^{'}, \beta\} & & \{L/[K,L],
N, Q,
\partial_{\ast}, \beta\} \ar@{-->}[ll]^-{(f_{\ast},id_N,id_Q)}& }
\end{equation*}%
or more simply as%
\begin{equation*}
\xymatrix @R=20pt@C=25pt { & & & B  \ar[dd]^{{\partial}^\prime} & \\
\ L  \ar[dd]_{\partial} \ar[rr]_{\phi^{\prime \prime}} \ar@/^/[urrr]^{f} &  & L/[K,L] \ar@{-->}[ur]_{f_\ast} \ar[dd]^{\partial_\ast} &  & \\
\ & & & N  \ar[dd]^{\beta} & \\
\ M \ar[dd]_{\alpha} \ar[rr]_{\phi^\prime} \ar@/^/[urrr]^{\phi^\prime} &  & N \ar@{=}[ur]_{id_N} \ar[dd]^{\beta} & & \\
\  & & & Q  & \\
\  P \ar@/^/[urrr]^\phi \ar[rr]_\phi &  & Q \ar@{=}[ur]_{id_Q}.& & }
\end{equation*}
\end{proof}

\begin{proposition}
\label{ug2}Let $\theta =(\phi ^{\prime },\phi ):(M,P,\alpha )\rightarrow
(N,Q,\beta )$ be a pre-crossed module morphism where $\phi $ is a
monomorphism and $\left\{ L,M,P,\partial ,\alpha \right\} $ be a $2$-crossed
module, let $T$ be a left transversal of $\phi (P)$ in $Q,$ and $%
_{T}L\ast \langle N\times N\rangle $ be the free product of $_{T}L$ and $%
\langle N\times N\rangle $ where $_{T}L$ is the free product of groups $%
(t\in T)$ each isomorphic with $L$ by an isomorphism $l\mapsto $ $_{t}l$ $%
(l\in L)$ and $\langle N\times N\rangle $ is the free group generated by the
set $N\times N. $ If $q\in Q$ acts on $_{T}L\ast \langle N\times N\rangle $
by the rule $^{q}(_{t}l)=$ $^{u}(_{p}l)$ $\ $where $p\in P,u\in T$ and $%
qt=u\phi (p),$ then $\theta _{\ast }(L)=$ $(_{T}L\ast \langle N\times N\rangle )/S$ and $%
\left\{ \theta _{\ast }(L),N,Q,\bar{\partial},\beta \right\} $ is a $2$%
-crossed module with the Peiffer lifting $\left\{ n_{1},n_{2}\right\}
=(n_{1},n_{2})$ where $S$ is the normal closure in $_{T}L\ast \langle
N\times N\rangle $ of the elements
\begin{equation*}
\begin{array}{c}
\left\{ \bar{\partial}(_{t}l(n_{1},n_{2})),\bar{\partial}(_{t^{\prime
}}l^{\prime }(n_{1}^{\prime },n_{2}^{\prime }))\right\} =\left[
_{t}l(n_{1},n_{2}),\ _{t^{\prime }}l^{\prime }(n_{1}^{\prime },n_{2}^{\prime })%
\right] \\
\left\{ n_{0},n_{1}n_{2}\right\} =\ ^{n_{0}n_{1}n_{0}^{-1}}\left\{
n_{0},n_{2}\right\} \left\{ n_{0},n_{1}\right\} \\
\left\{ n_{0}n_{1},n_{2}\right\} =\left\{ n_{0},n_{1}n_{2}^{-1}n_{1}\right\}
\left\{ n_{0},n_{1}\right\} \\
\left\{ \bar{\partial}(_{t}l(n_{1},n_{2})),n\right\} =_{t}l(n_{1},n_{2})%
\text{ }^{n}\left( _{t}l(n_{1},n_{2})\right) ^{-1} \\
\left\{ n,\bar{\partial}(_{t}l(n_{1},n_{2}))\right\} =\ ^{n}\left(
_{t}l(n_{1},n_{2})\right) \text{ }^{\beta (n)}\left(
_{t}l(n_{1},n_{2})\right) ^{-1} \\
\left\{ m_{1},m_{2}\right\} =\left\{ \phi ^{\prime }(m_{1}),\phi ^{\prime
}(m_{2})\right\}%
\end{array}%
\end{equation*}%
for $l,l^{\prime }\in L,n,n_{0},n_{1},n_{2}\in N,$ $t,t^{\prime }\in T$ and $%
m_{1},m_{2}\in M.$
\end{proposition}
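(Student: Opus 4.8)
The plan is to deduce the statement directly from the general construction of the induced $2$-crossed module (Section~\ref{go1}, Theorem~\ref{ug4}), mirroring the way Brown and Higgins pass from the general induced crossed module to the transversal description of Proposition~\ref{gs2}. Recall that $\theta_\ast(L)$ is already presented as $({}^{Q}L\ast\langle N\times N\rangle)/S$, where $^{Q}L$ is the free $Q$-group on $L$ with the defining relations $(q,l_1)(q,l_2)=(q,l_1l_2)$ and $(\phi(p),l)=(1,{}^{p}l)$, and $S$ is the normal closure of the relations~(\ref{go1}.1) (that is PL2, PL3, PL4) together with $\{m_1,m_2\}=\{\phi^\prime(m_1),\phi^\prime(m_2)\}$ from~(\ref{go1}.2). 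When $\phi$ is a monomorphism the only thing that changes is that the quotient of $^{Q}L$ by the identification $(\phi(p),l)=(1,{}^{p}l)$ admits a transparent transversal description; everything else is transported verbatim.

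First I would establish a $Q$-equivariant isomorphism between $^{Q}L$ modulo the relation $(\phi(p),l)=(1,{}^{p}l)$ and the free product ${}_{T}L$. Since $T$ is a left transversal of $\phi(P)$ in $Q$ and $\phi$ is injective, every $q\in Q$ has a \emph{unique} factorisation $q=t\phi(p)$ with $t\in T$ and $p\in P$. Combining the $Q$-action $^{q^\prime}(q,l)=(q^\prime q,l)$ with that relation yields $(t\phi(p),l)=(t,{}^{p}l)$, so each generator $(q,l)$ reduces to $(t,{}^{p}l)$ with $t\in T$. I would then send $(t,l)\mapsto{}_{t}l$ and verify, using $(q,l_1)(q,l_2)=(q,l_1l_2)$, that this is a well-defined group homomorphism with inverse ${}_{t}l\mapsto(t,l)$, identifying the quotient with ${}_{T}L$. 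Transporting the action gives $^{q}({}_{t}l)=(qt,l)=(u\phi(p),l)=(u,{}^{p}l)={}_{u}({}^{p}l)$, where $qt=u\phi(p)$, which is exactly the rule in the statement.

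With this identification the remaining data transport immediately. The boundary $\bar\partial$ of the construction restricts to $\bar\partial({}_{t}l)={}^{t}\phi^\prime\partial(l)$ on ${}_{T}L$ and to $\bar\partial(n_1,n_2)=\langle n_1,n_2\rangle$ on $\langle N\times N\rangle$, extended multiplicatively, while $\partial_\ast$, $\beta$ and the Peiffer lifting $\{n_1,n_2\}=(n_1,n_2)$ are inherited unchanged. Rewriting the generators of $S$ in terms of the symbols ${}_{t}l(n_1,n_2)$ turns relations~(\ref{go1}.1) into the first five listed relations and leaves $\{m_1,m_2\}=\{\phi^\prime(m_1),\phi^\prime(m_2)\}$ as the sixth; the relation $(\phi(p),l)=(1,{}^{p}l)$ has been absorbed into the transversal description. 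Hence $\theta_\ast(L)=({}_{T}L\ast\langle N\times N\rangle)/S$ with $S$ the stated normal closure, and the resulting complex is a $2$-crossed module by Theorem~\ref{ug4}, so no Peiffer axiom has to be reverified.

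The step I expect to be the main obstacle is confirming that the transported action $^{q}({}_{t}l)={}_{u}({}^{p}l)$ is well defined and is a genuine left $Q$-action. This rests on the uniqueness of the factorisation $qt=u\phi(p)$, which is precisely where injectivity of $\phi$ is used: without it, $p$ and hence ${}^{p}l$ would not be determined by $q$ and $t$. A second point to check is that passing from the $(q,l)$-generators to the ${}_{t}l$-generators really yields the free product ${}_{T}L$ and not a proper quotient, i.e.\ that no relations between distinct transversal copies are introduced; this holds because the relation $(\phi(p),l)=(1,{}^{p}l)$ only ever relates generators lying over one and the same coset $t\phi(P)$, and similarly for $(q,l_1)(q,l_2)=(q,l_1l_2)$. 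Once these structural facts are in place, matching the two relation lists is routine bookkeeping.
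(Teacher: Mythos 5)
The paper itself contains no proof of Proposition \ref{ug2}: it is stated as the dimension-$3$ analogue of Brown--Higgins' Proposition \ref{gs2} and left unproved (the appendix only covers Theorem \ref{ug3} and Proposition \ref{ug1}), so there is no line-by-line comparison to make. Your argument is correct and is exactly the route the paper intends: specialize the construction of Theorem \ref{ug4} by showing that, when $\phi$ is injective, ${}^{Q}L$ modulo the $Q$-translates of the relation $(\phi(p),l)=(1,{}^{p}l)$ is $Q$-equivariantly isomorphic to the free product ${}_{T}L$ via the unique factorization $q=t\phi(p)$, and then transport $\bar\partial$, the action, the Peiffer lifting and the remaining generators of $S$ verbatim. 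You also isolate the two points that genuinely need care: injectivity of $\phi$ (together with $T$ being a transversal) is what makes the action rule ${}^{q}({}_{t}l)={}_{u}({}^{p}l)$, $qt=u\phi(p)$, well defined; and the identification $(q\phi(p),l)\sim(q,{}^{p}l)$ only ever merges copies of $L$ lying over the same coset $t\phi(P)$, via group isomorphisms $l\mapsto{}^{p}l$, which is why the quotient is the free product ${}_{T}L$ and not a proper quotient of it. Two incidental benefits of your reading are worth recording: the statement's ``${}^{u}({}_{p}l)$'' should indeed be ${}_{u}({}^{p}l)$ (as in Proposition \ref{gs2}), and the relation $(\phi(p),l)=(1,{}^{p}l)$ of Theorem \ref{ug4} must be taken together with its $Q$-translates (equivalently, the quotient is formed in the category of $Q$-groups), since otherwise the quotient would not inherit a $Q$-action; your proof makes both corrections explicit.
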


Now consider an arbitrary push-out square

\begin{equation*}
\xymatrix{ \{L_{0},M_{0},P_{0},\partial_{0},\alpha_{0}\} \ar[d]
\ar[r] & \{L_{1},M_{1},P_{1},\partial_{1},\alpha_{1}\} \ar[d] &
(\ref{go1}.3)\\ \
\{L_{2},M_{2},P_{2},\partial_{2},\alpha_{2}\}\ar[r] &
\{L,M,P,\partial,\alpha\} & }
\end{equation*}%
of $2$-crossed modules. In order to describe $\left\{ L,M,P,\partial ,\alpha
\right\} $, we first note that $(M\overset{\alpha }{\rightarrow }P)$ is the
push-out of the pre-crossed module morphisms $(M_{1},P_{1},\alpha
_{1})\leftarrow (M_{0},P_{0},\alpha _{0})\rightarrow (M_{2},P_{2},\alpha
_{2}).$ (This is because the functor
\begin{equation*}
\left\{ L,M,P,\partial ,\alpha \right\} \mapsto (M,P,\alpha )
\end{equation*}%
from $2$-crossed modules to pre-crossed modules has a right adjoint $%
(M,P,\partial )\mapsto \left\{ \langle M,M\rangle ,M,P,i,\alpha
\right\}.) $ The pre-crossed module morphisms
\[
\theta_{i}=(\phi _{i}^{\prime },\phi
_{i}):(M_{i},P_{i},\alpha _{i})\\ \rightarrow (M,P,\alpha )
\]
$(i=0,1,2)$ in $%
(\ref{go1}.3) $ and can be used to form induced $(M\overset{\alpha }{%
\rightarrow }P)$-$2$-crossed modules $B_{i}=\left( \theta
_{i}\right) _{\ast
}(L_{i}).$ $L$ is the push-out in \textsf{X}$_{2}$\textsf{Mod}${/(M,P)}$ of the resulting $(M%
\overset{\alpha }{\rightarrow }P)$-$2$-crossed module morphisms
\begin{equation*}
\left( B_{1}\rightarrow M\rightarrow P\right) \overset{(\beta
_{1},id_{M},id_{P})}{\longleftarrow }\left( B_{0}\rightarrow M\rightarrow
P\right) \overset{(\beta _{2},id_{M},id_{P})}{\longrightarrow }\left(
B_{2}\rightarrow M\rightarrow P\right)
\end{equation*}%
and can be described as follows.

\begin{proposition}
Let $B_{i}$ be a $(M\overset{\alpha }{\rightarrow }P)$-$2$-crossed module
for $i=0,1,2$ and let $L$ be the push-out in \textsf{X}$_{2}$\textsf{Mod}${/(M,P)}$ of $(M\overset{%
\alpha }{\rightarrow }P)$-$2$-crossed module morphisms

\begin{equation*}
 B_{1} \overset{\beta
_{1}}{\longleftarrow } B_{0} \overset{\beta _{2}}{\longrightarrow }
B_{2}.
\end{equation*}%

\noindent Let $B$ be the push-out of $\beta _{1}$ and $\beta _{2}$ in the category of
groups, equipped with the induced morphism $B\overset{\partial }{\rightarrow
}(M\overset{\alpha }{\rightarrow }P$) such that $\alpha\beta=1$
and the induced action of $P$ on $B$ $.$ Then $L=B/S,$ where $S$ is the
normal closure in $B$ of the elements
\begin{equation*}
\begin{array}{c}
\left\{ \partial \left( b\right) ,\partial \left( b^{\prime }\right)
\right\} \left[ b,b^{\prime }\right] ^{-1} \\
\left\{ m,m^{\prime }m^{\prime \prime }\right\} \left\{ m,m^{\prime
}\right\} ^{-1}\left( ^{mm^{\prime }m^{-1}}\left\{ m,m^{\prime \prime
}\right\} \right) ^{-1} \\
\left\{ mm^{\prime },m^{\prime \prime }\right\} \left( ^{\alpha
\left( m\right) }\left\{ m^{\prime },m^{\prime \prime }\right\}
\right)
^{-1}\left\{ m,m^{\prime }m^{\prime \prime }m^{\prime -1}\right\} ^{-1} \\
\left\{ \partial \left( b\right) ,m\right\} \left( ^{m}b^{-1}\right)
^{-1}b^{-1} \\
\left\{ m,\partial \left( b\right) \right\} \left( ^{\alpha \left( m\right)
}b^{-1}\right) ^{-1}\left( ^{m}b\right) ^{-1} \\
^{p}\left\{ m,m^{\prime }\right\} \left\{ ^{p}m,^{p}m^{\prime }\right\} ^{-1}%
\end{array}%
\end{equation*}%
for $b,b^{\prime }\in B,m,m^{\prime },m^{\prime \prime }\in M$ and
$p\in P.$
\end{proposition}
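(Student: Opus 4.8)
The plan is to realise this push-out, following the template of Theorem~\ref{ug4}, as a push-out of the underlying groups cut down by the defining relations of a $2$-crossed module. First I would assemble the structure carried by the group push-out $B$. Each $\beta_i$ is a morphism in $\mathsf{X}_2\mathsf{Mod}/(M,P)$, so $\partial_1\beta_1=\partial_0=\partial_2\beta_2$, each $\beta_i$ is $P$-equivariant, and $\beta_i\{m,m'\}_0=\{m,m'\}_i$. The universal property of $B$ in groups then yields a unique $\partial:B\to M$ with $\partial\iota_i=\partial_i$, a unique $P$-action making the injections $\iota_i$ equivariant, and a well-defined Peiffer lifting $\{m,m'\}:=\iota_1\{m,m'\}_1=\iota_2\{m,m'\}_2$, where the two expressions agree because both equal $\iota_1\beta_1\{m,m'\}_0=\iota_2\beta_2\{m,m'\}_0$. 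One checks $\alpha\partial=1$, so $B\to M\to P$ is a complex, and the formula $^{m}b=b\{\partial b^{-1},m\}$ supplies the candidate $M$-action.

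Next I would examine which generators of $S$ are already satisfied in $B$. The instances of PL1, PL3 and PL5 hold in $B$ because each is an identity preserved by $\iota_i$: it holds in every $B_i$, and $\iota_i$ respects $\partial$, both actions and the lifting. By contrast PL2 and PL4 fail in general once $b,b'$ lie in different factors $\iota_1(B_1),\iota_2(B_2)$, because the mixed commutator $[\iota_1b_1,\iota_2b_2]$ is a genuinely new element of the group push-out, not governed by either $B_i$. Hence $L:=B/S$ is the least quotient of $B$ in which all of PL1--PL5 hold. I would then verify directly that the induced boundary $\partial_\ast:L\to M$, the $P$-action and the lifting make $\{L,M,P,\partial_\ast,\alpha\}$ a $2$-crossed module over $(M,P)$: PL2--PL5 hold by construction, PL1 descends from $B$, and one confirms $\partial(S)=1$ and that $S$ is $P$-stable, so that $\partial_\ast$ and the action pass to the quotient.

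Finally I would check the universal property in $\mathsf{X}_2\mathsf{Mod}/(M,P)$. Given a $2$-crossed module $\{L',M,P,\partial',\alpha\}$ together with morphisms $\gamma_i:B_i\to L'$ satisfying $\gamma_1\beta_1=\gamma_2\beta_2$, the group push-out produces a unique homomorphism $\gamma:B\to L'$ with $\gamma\iota_i=\gamma_i$. Since $L'$ obeys every $2$-crossed module axiom, $\gamma$ sends each generator of $S$ to the identity, so it factors through a unique $\bar\gamma:L=B/S\to L'$; that $\bar\gamma$ respects $\partial_\ast$, the $P$-action and the Peiffer lifting follows from the corresponding properties of $\gamma$ on generators, and uniqueness is inherited from the uniqueness of $\gamma$. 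The main obstacle is the verification in the second paragraph that $L=B/S$ is really a $2$-crossed module: one must show that imposing PL2 and PL4 on the mixed products is consistent with PL1 and with the action formula $^{m}b=b\{\partial b^{-1},m\}$, so that $(L,M,\partial_\ast)$ acquires a genuine crossed-module action and the relations neither conflict nor force additional collapse. This is the dimension-$3$ analogue of checking, in Theorem~\ref{ug4}, that the Peiffer quotient of $^{Q}L\ast\langle N\times N\rangle$ still carries the required action.
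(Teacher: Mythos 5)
Your proposal is correct, but there is nothing in the paper to measure it against: the paper states this proposition with no proof at all, neither in the body nor in the appendix (which only treats Theorem \ref{ug3} and Proposition \ref{ug1}). So your argument fills a genuine gap, and it does so by the method the paper itself uses for Theorem \ref{ug4}: put everything into one group and divide by the defining relations of a $2$-crossed module. The essential points are all present in your sketch. The Peiffer lifting on the group push-out $B$ is well defined because $\iota_1\{m,m'\}_1=\iota_1\beta_1\{m,m'\}_0=\iota_2\beta_2\{m,m'\}_0=\iota_2\{m,m'\}_2$; the PL1, PL3 and PL5 instances are already trivial in $B$, being $\iota_i$-images of identities valid in $B_i$ (and $\iota_i$ respects $\partial$, the lifting, the $P$-action, hence also the derived $M$-action); only PL2 and PL4 on mixed words carry content; and the universal property follows by factoring the group homomorphism $\gamma:B\to L'$ obtained from a cocone through $B/S$, since $\gamma$ preserves $\partial$, the actions and the lifting, so it kills every listed generator of $S$ because $L'$ satisfies the axioms.

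Two refinements. First, the ``main obstacle'' you flag at the end is not an obstacle: a quotient by a normal closure always exists, so the relations cannot ``conflict'', and additional collapse would not contradict the statement. What must actually be verified is exactly what you list earlier: that $\partial(S)=1$, which follows from PL1 in $B$ together with $\alpha\partial=1$ (the statement's ``$\alpha\beta=1$'' is a typo for this), since then $\langle\partial b,\partial b'\rangle=[\partial b,\partial b']$ so the PL2 generators die under $\partial$; and that $S$ is $P$-invariant, which follows from PL5 in $B$ and the $P$-equivariance of $\partial$. Second, with the Conduch\'{e} action ${}^{m}b=b\{\partial b^{-1},m\}$, the PL4(a) generators are automatically trivial for \emph{every} $b\in B$, not only for $b$ in a single factor; the genuinely new relations imposed by $S$ are PL2 and PL4(b) evaluated on mixed words in $\iota_1(B_1)$ and $\iota_2(B_2)$.
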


In the case when $\left\{ L_{2},M_{2},P_{2},\partial _{2},\alpha
_{2}\right\} $ is the trivial $2$-crossed module $\left\{
1,1,1,id,id\right\} $ the push-out $\left\{ L,M,P,\partial
_{2},\partial _{1}\right\} $ in (\ref{go1}.3) is the
cokernel of the morphism
\begin{equation*}
\left\{ L_{0},M_{0},P_{0},\partial _{0},\alpha _{0}\right\}
\rightarrow \left\{ L_{1},M_{1},P_{1},\partial _{1},\alpha
_{1}\right\}.
\end{equation*}

\section {Appendix}

\textbf{The proof of Theorem} \textbf{\ref{ug3}} \\

\bigskip

\ \textbf{PL3: } \newline

$%
\begin{array}{cl}
a) & ^{\left( n,p\right) \left( n^{\prime },p^{\prime }\right) \left(
n,p\right) ^{-1}}\left\{ \left( n,p\right) ,\left( n^{\prime \prime
},p^{\prime \prime }\right) \right\} \left\{ \left( n,p\right) ,\left(
n^{\prime },p^{\prime }\right) \right\}  \\
= & ^{\left( nn^{\prime }n^{-1},pp^{\prime }p^{-1}\right) }\left\{
n,n^{\prime \prime }\right\} \left\{ n,n^{\prime }\right\}  \\
= & ^{nn^{\prime }n^{-1}}\left\{ n,n^{\prime \prime }\right\} \left\{
n,n^{\prime }\right\} \text{ } \\
= & \left\{ n,n^{\prime }n^{\prime \prime }\right\}  \\
= & \left\{ \left( n,p\right) ,\left( n^{\prime }n^{\prime \prime
},p^{\prime }p^{\prime \prime }\right) \right\}  \\
= & \left\{ \left( n,p\right) ,\left( n^{\prime },p^{\prime }\right) \left(
n^{\prime \prime },p^{\prime \prime }\right) \right\}
\end{array}%
\bigskip $

$%
\begin{array}{ll}
b) & \left\{ \left( n,p\right) ,\left( n^{\prime },p^{\prime }\right) \left(
n^{\prime \prime },p^{\prime \prime }\right) \left( n^{\prime },p^{\prime
}\right) ^{-1}\right\} \left( ^{\partial _{1}^{\ast }\left( n,p\right)
}\left\{ \left( n^{\prime },p^{\prime }\right) ,\left( n^{\prime \prime
},p^{\prime \prime }\right) \right\} \right)  \\
= & \left\{ \left( n,p\right) ,\left( n^{\prime }n^{\prime \prime }n^{\prime
-1},p^{\prime }p^{\prime \prime }p^{\prime -1}\right) \right\} ^{p}\left\{
\left( n^{\prime },p^{\prime }\right) ,\left( n^{\prime \prime },p^{\prime
\prime }\right) \right\} \text{ } \\
= & \left\{ n,n^{\prime }n^{\prime \prime }n^{\prime -1}\right\} ^{p}\left\{
n^{\prime },n^{\prime \prime }\right\} \text{ } \\
= & \left\{ n,n^{\prime }n^{\prime \prime }n^{\prime -1}\right\} ^{\phi
\left( p\right) }\left\{ n^{\prime },n^{\prime \prime }\right\} \text{ } \\
= & \left\{ n,n^{\prime }n^{\prime \prime }n^{\prime -1}\right\} ^{\partial
_{1}\left( n\right) }\left\{ n^{\prime },n^{\prime \prime }\right\} \text{ }
\\
= & \left\{ nn^{\prime },n^{\prime \prime }\right\} \text{ } \\
= & \left\{ \left( nn^{\prime },pp^{\prime }\right) ,\left( n^{\prime \prime
},p^{\prime \prime }\right) \right\}  \\
= & \left\{ \left( n,p\right) \left( n^{\prime },p^{\prime }\right) ,\left(
n^{\prime \prime },p^{\prime \prime }\right) \right\}
\end{array}%
\bigskip $

for $(n,p),(n^{\prime },p^{\prime }),(n^{\prime \prime },p^{\prime \prime
})\in \phi ^{\ast }(N)$.

\bigskip

\textbf{PL4: }

\bigskip $%
\begin{array}{rcl}
\left\{ \partial _{2}^{\ast }h,\left( n,p\right) \right\} \left\{ \left(
n,p\right) ,\partial _{2}^{\ast }h\right\}  & = & \left\{ \left( \partial
_{2}h,1\right) ,\left( n,p\right) \right\} \left\{ \left( n,p\right) ,\left(
\partial _{2}h,1\right) \right\} \text{ } \\
& = & \left\{ \partial _{2}h,n\right\} \left\{ n,\partial _{2}h\right\}
\text{ } \\
& = & h\text{ }^{\partial _{1}\left( n\right) }h^{-1}\text{ } \\
& = & h\text{ }^{\phi (p)}h^{-1}\text{ } \\
& = & h\text{ }^{p}h^{-1}\text{ } \\
& = & h\text{ }^{\partial _{1}^{\ast }\left( n,p\right) }h^{-1}%
\end{array}%
\bigskip $

for $(n,p)\in \phi ^{\ast }(N)$ and $h\in \partial _{2}^{-1}(Ker\partial _{1}).
$

$\bigskip $

\textbf{PL5:}

$%
\begin{array}{rcl}
\left\{ ^{p^{\prime \prime }}\left( n,p\right) ,^{p^{\prime \prime }}\left(
n^{\prime },p^{\prime }\right) \right\}  & = & \left\{ \left( ^{\phi \left(
p^{\prime \prime }\right) }n,p^{\prime \prime }p\left( p^{\prime \prime
}\right) ^{-1}\right) ,\left( ^{\phi \left( p^{\prime \prime }\right)
}n^{\prime },p^{\prime \prime }p^{\prime }\left( p^{\prime \prime }\right)
^{-1}\right) \right\} \text{ } \\
& = & \left\{ ^{\phi \left( p^{\prime \prime }\right) }n,^{\phi \left(
p^{\prime \prime }\right) }n^{\prime }\right\} \text{ } \\
& = & ^{\phi \left( p^{\prime \prime }\right) }\left\{ n,n^{\prime }\right\}
\text{ } \\
& = & ^{p^{\prime \prime }}\left\{ n,n^{\prime }\right\} \text{ } \\
& = & ^{p^{\prime \prime }}\left\{ \left( n,p\right) ,\left( n^{\prime
},p^{\prime }\right) \right\}
\end{array}%
\bigskip $

for $\left( n,p\right) ,\left( n^{\prime },p^{\prime }\right) \in \phi
^{\ast }(N)$ and $p^{\prime \prime }\in P.$ \bigskip

$%
\begin{array}{rcllrll}
\phi ^{\prime \prime }\left( ^{p}h\right)  & = & ^{p}h & \text{and} & \phi
^{\prime }({^{p}(n,p^{\prime })}) & = & \phi ^{\prime }\left( ^{\phi \left(
p\right) }n,pp^{\prime }p^{-1}\right)  \\
& = & ^{\phi \left( p\right) }h &  &  & = & ^{\phi \left( p\right) }n \\
& = & ^{\phi \left( p\right) }\phi ^{\prime \prime }\left( h\right)  &  &  &
= & ^{\phi \left( p\right) }\phi ^{\prime }\left( n,p^{\prime }\right) %
\end{array}%
\bigskip $

$%
\begin{array}{rlllrll}
\phi ^{\prime }\left( \partial _{2}^{\ast }h\right)  & = & \phi ^{\prime
}\left( \partial _{2}h,1\right)  & \text{and} & \partial _{1}\left( \phi
^{\prime }\left( n,p^{\prime }\right) \right)  & = & \partial _{1}\left(
n\right)  \\
& = & \partial _{2}\left( h\right)  &  &  & = & \phi \left( p^{\prime
}\right)  \\
& = & \partial _{2}\left( \phi ^{\prime \prime }h\right)  &  &  & = & \phi
\left( \partial _{1}^{\ast }\left( n,p^{\prime }\right) \right) %
\end{array}%
\bigskip $

$%
\begin{array}{rcl}
\left\{ -,-\right\} \left( \phi ^{\prime }\times \phi ^{\prime }\right)
\left( (n,p),\left( n^{\prime },p^{\prime }\right) \right)  & = & \left\{
-,-\right\} \left( \phi ^{\prime }(n,p),\phi ^{\prime }(n^{\prime
},p^{\prime })\right)  \\
& = & \left\{ -,-\right\} (n,n^{\prime }) \\
& = & \left\{ n,n^{\prime }\right\}  \\
& = & \phi ^{\prime \prime }\left( \left\{ n,n^{\prime }\right\} \right)  \\
& = & \phi ^{\prime \prime }\left( \left\{ (n,p),\left( n^{\prime
},p^{\prime }\right) \right\} \right)  \\
& = & \phi ^{\prime \prime }\left\{ -,-\right\} \left( (n,p),\left(
n^{\prime },p^{\prime }\right) \right)
\end{array}%
$ \bigskip

for $h\in \partial _{2}^{-1}(Ker\partial _{1}),(n,p),(n^{\prime },p^{\prime
}),(n,p^{\prime })\in \phi ^{\ast }(N)$ and $p\in P$.

\bigskip

\textbf{The proof of proposition} \textbf{\ref{ug1}:} \\

\textbf{PL3:}%
\begin{equation*}
\begin{array}{rcl}
\left\{ mT,m^{\prime }Tm^{\prime \prime }T\right\}  & = & \left\{
m,m^{\prime }m^{\prime \prime }\right\} [K,L] \\
& = & (^{mm^{\prime }m^{-1}}\left\{ m,m^{\prime \prime }\right\}
\left\{
m,m^{\prime }\right\} )[K,L] \\
& = & ^{mm^{\prime }m^{-1}}\left\{ m,m^{\prime \prime }\right\}
[K,L]\left\{
m,m^{\prime }\right\} [K,L] \\
& = & ^{(mm^{\prime }m^{-1})T}(\left\{ m,m^{\prime \prime }\right\}
[K,L])\left\{ m,m^{\prime }\right\} [K,L] \\
& = & ^{mTm^{\prime }Tm^{-1}T}\left\{ m,m^{\prime \prime }\right\}
[K,L]\left\{ m,m^{\prime }\right\} [K,L] \\
& = & ^{mTm^{\prime }T(mT)^{-1}}\left\{ mT,m^{\prime \prime
}T\right\} \left\{ mT,m^{\prime }T\right\}
\end{array}%
\end{equation*}%
\begin{equation*}
\begin{array}{rcl}
\left\{ mTm^{\prime }T,m^{\prime \prime }T\right\}  & = & \left\{
mm^{\prime
},m^{\prime \prime }\right\} [K,L] \\
& = & (\left\{ m,m^{\prime }m^{\prime \prime }m^{\prime -1}\right\} \text{ }%
^{\alpha (m)}\left\{ m^{\prime },m^{\prime \prime }\right\} )[K,L] \\
& = & \left\{ m,m^{\prime }m^{\prime \prime }m^{\prime -1}\right\} [K,L]%
\text{ }(^{\alpha (m)K}\left\{ m^{\prime },m^{\prime \prime
}\right\} )[K,L]
\\
& = & \left\{ mT,m^{\prime }Tm^{\prime \prime }T(m^{\prime
}T)^{-1}\right\} \text{ }^{\beta (mT)}\left\{ m^{\prime }T,m^{\prime
\prime }T\right\}
\end{array}%
\end{equation*}
for $m,m^{\prime },m^{\prime \prime }\in M.$

\bigskip

\textbf{PL4:}%
\begin{equation*}
\begin{array}{rcl}
a)\left\{ \partial _{\ast }(l[K,L]),mT\right\}  & = & \left\{
\partial
(l)T,mT\right\}  \\
& = & \left\{ \partial (l),m\right\} [K,L] \\
& = & (l\text{ }^{m}l^{-1})[K,L] \\
& = & l[K,L]\text{ }^{m}l^{-1}[K,L] \\
& = & l[K,L]\text{ }^{mT}(l[K,L])^{-1}%
\end{array}%
\end{equation*}%
\begin{equation*}
\begin{array}{rcl}
b)\left\{ mT,\partial _{\ast }(l[K,L])\right\}  & = & \left\{
mT,\partial
(l)T\right\}  \\
& = & \left\{ m,\partial (l)\right\} [K,L] \\
& = & (^{m}l\text{ }^{\alpha (m)}l^{-1})[K,L] \\
& = & ^{m}l[K,L]\text{ }^{\alpha (m)}l^{-1}[K,L] \\
& = & ^{mT}(l[K,L])\text{ }^{\alpha (m)K}(l^{-1}[K,L]) \\
& = & ^{mT}(l[K,L])\text{ }^{\beta (mT)}(l[K,L])^{-1}%
\end{array}%
\end{equation*}
for $l \in L$ and $m\in M.$

\bigskip

\textbf{PL5:}%
\begin{equation*}
\begin{array}{rcl}
^{pK}\left\{ mT,m^{\prime }T\right\}  & = & ^{pK}(\left\{
m,m^{\prime
}\right\} [K,L]) \\
& = & ^{p}\left\{ m,m^{\prime }\right\} [K,L] \\
& = & \left\{ ^{p}m,^{p}m^{\prime }\right\} [K,L] \\
& = & \left\{ ^{p}mT,^{p}m^{\prime }T\right\}  \\
& = & \left\{ ^{pK}(mT),^{pK}(m^{\prime }T)\right\}
\end{array}%
\end{equation*}%
for $m,m^{\prime }\in M$ and $p\in P.$

\begin{equation*}
\begin{array}{rcl}
\phi ^{\prime \prime }(^{p}l) & = & ^{p}l[K,L] \\
& = & ^{pK}(l[K,L]) \\
& = & ^{\phi (p)}\phi ^{\prime \prime }(l)%
\end{array}%
\qquad \text{and\qquad }%
\begin{array}{rcl}
\phi ^{\prime }(^{p}m) & = & ^{p}mT \\
& = & ^{pK}(mT) \\
& = & ^{\phi (p)}\phi ^{\prime }(m)%
\end{array}%
\end{equation*}%
\begin{equation*}
\begin{array}{rcl}
\partial _{\ast }\phi ^{\prime \prime }(l) & = & \partial _{\ast }(l[K,L])
\\
& = & \partial (l)T \\
& = & \phi ^{\prime }\partial (l)%
\end{array}%
\qquad \text{and}\qquad
\begin{array}{rcl}
\beta \phi ^{\prime }(m) & = & \beta (mT) \\
& = & \alpha (m)K \\
& = & \phi \alpha (m)%
\end{array}%
\end{equation*}%
and similarly $\phi ^{\prime \prime }\left\{ -,-\right\} =\left\{
-,-\right\} (\phi ^{\prime }\times \phi ^{\prime })$ for $l\in
L,m\in M$ and $p\in P.$

\noindent U. Ege Arslan, Z. Arvas\.{i} and G. Onarl\i \newline
Department of Mathematics-Computer \newline
Eski\c{s}ehir Osmangazi University \newline
26480 Eski\c{s}ehir/Turkey\newline
e-mails: \{uege,zarvasi, gonarli\} @ogu.edu.tr

\end{document}